\declaretheorem[name=Theorem]{thm}
\declaretheorem[name=Conjecture,sibling=thm]{conj}
\declaretheorem[name=Problem,sibling=thm]{prob}
\declaretheorem[name=Claim,sibling=thm]{clm}
\newcommand{\rg}{\operatorname{rg}}
\title{Short rainbow cycles in graphs and matroids}
\author{Matt DeVos}
\address[M.~DeVos]{Department of Mathematics, Simon Fraser University, Canada}
\email{mdevos@sfu.ca}
\author{Matthew Drescher}
\address[M.~Drescher]{D\'epartement de Math\'ematique, Universit\'e Libre de Bruxelles, Belgium}
\email{knavely@gmail.com}
\author{Daryl Funk}
\address[D.~Funk]{Department of Mathematics, Douglas College, Canada}
\email{dfunk@sfu.ca}
\author{Sebastián González Hermosillo de la Maza}
\address[S.~González]{Department of Mathematics, Simon Fraser University, Canada}
\email{sga89@sfu.ca}
\author{Krystal Guo}
\address[K.~Guo]{D\'epartement de Math\'ematique, Universit\'e Libre de Bruxelles, Belgium}
\email{guo.krystal@gmail.com}
\author{Tony Huynh}
\address[T.~Huynh]{School of Mathematics, Monash University, Australia}
\email{tony.bourbaki@gmail.com}
\author{Bojan Mohar}
\address[B.~Mohar]{Department of Mathematics, Simon Fraser University, Canada \newline 
On leave from IMFM, Department of Mathematics, University
of Ljubljana}
\email{mohar@math.sfu.ca}
\author{Amanda Montejano}
\address[A.~Montejano]{Facultad de Ciencias, UNAM campus Juriquilla, México}
\email{amandamontejano@ciencias.unam.mx}
\thanks{M.\ Drescher, K.\ Guo, and T.\ Huynh were supported  by ERC Consolidator Grant 615640-ForEFront.  T.\ Huynh is also supported by the Australian Research Council.}
\thanks{B.~Mohar was supported in part by the NSERC Discovery Grant R611450 (Canada)
and by the ARRS Research Project J1-8130 (Slovenia).}
\begin{document}

\begin{abstract}
Let $G$ be a simple $n$-vertex graph and $c$ be a colouring of $E(G)$ with $n$ colours, where each colour class has size at least $2$.   We prove that $(G,c)$ contains a rainbow cycle of length at most $\lceil \frac{n}{2} \rceil$, which is best possible.  Our result settles a special case of a strengthening of the Caccetta-Häggkvist conjecture, due to Aharoni.  We also show that the matroid generalization of our main result also holds for cographic matroids, but fails for binary matroids.
\end{abstract}

\maketitle

\section{Introduction}
In 1978, Caccetta and Häggkvist~\cite{CH78} made the following conjecture.  

\begin{conj}[Caccetta-Häggkvist] \label{conj:CH}
For all positive integers $n,r$, every simple $n$-vertex digraph with minimum outdegree at least $r$ contains a directed cycle of length at most $\lceil \frac{n}{r} \rceil$.
\end{conj}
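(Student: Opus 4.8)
The plan is to argue by contradiction. Suppose $D$ is a simple $n$-vertex digraph with minimum outdegree at least $r$ and with no directed cycle of length at most $g:=\lceil n/r\rceil$; I would try to show this forces $D$ to have more than $n$ vertices. Two hypotheses are available. From the degree condition, the adjacency matrix $A$ satisfies $A\mathbf 1\ge r\mathbf 1$ entrywise, and since $A$ is nonnegative this iterates to $A^k\mathbf 1\ge r^k\mathbf 1$; hence the number of directed walks of length $k$ is at least $nr^k$ for every $k$. From the girth condition, every closed directed walk of length at most $g$ would contain a directed cycle of length at most $g$, so there are none; equivalently $\operatorname{tr}(A^k)=0$ for all $1\le k\le g$. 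The whole strategy is to play these two facts against each other: a digraph on few vertices should not be able to support exponentially many long walks while killing all short closed walks.

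First I would try the direct ball-growth route, tracking $B_t:=\{u:\text{$u$ reachable from a fixed $v_0$ within $t$ steps}\}$ and hoping the girth condition forces $|B_t|$ to grow by at least $r$ at each step until it exceeds $n$ before $t$ reaches $g$. This is where the first, and decisive, obstacle appears: directed girth forbids short \emph{cycles} but not short \emph{transitive} configurations, so an out-neighbour of a vertex in $B_t$ may legitimately fall back inside $B_t$ (a path $v_0\to u\to w$ together with an arc $v_0\to w$ creates no cycle). Thus out-neighbourhoods can nest and the ball need not grow at all. This is precisely the phenomenon that makes the conjecture hard, and it rules out any purely local counting.

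I would therefore pass to a global, spectral accounting. The identities $\operatorname{tr}(A^k)=0$ for $1\le k\le g$ say that the first $g$ power sums of the eigenvalues $\lambda_1,\dots,\lambda_n$ of $A$ vanish, so by Newton's identities the first $g$ elementary symmetric functions vanish and the characteristic polynomial is $x^{n}+\sum_{j>g}(-1)^{j}e_j\,x^{\,n-j}$. Meanwhile $A$ is nonnegative with all row sums at least $r$, so its Perron root satisfies $\rho\ge r$. The aim is to show that an eigenvalue of modulus at least $r$ cannot coexist with $g\ge n/r$ vanishing power sums on only $n$ eigenvalues: intuitively, to cancel $\rho^k$ in $\sum_i\lambda_i^k$ for all $k\le g$ one needs the remaining eigenvalues to carry comparable mass, but there are too few of them once $g$ is as large as $n/r$.

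The main obstacle is that this last inequality is exactly the quantitative heart of the Caccetta--Häggkvist conjecture, and the complex eigenvalues can conspire to cancel $\rho^k$ through delicate phase alignment, so the crude power-sum bound is not tight enough to reach a contradiction. Honestly, I expect this step to resist: the best unconditional results obtained along these lines replace $\lceil n/r\rceil$ only by $\lceil n/r\rceil + C$ for an additive constant $C$ (Shen), and the extremal triangle case $g=3$ has been settled only up to a threshold near $r\ge 0.3465\,n$ rather than $r\ge n/3$ via flag-algebra semidefinite methods. Closing the additive gap to $0$ — equivalently, turning either the spectral balance or the out-neighbourhood counting into a tight bound — is the crux on which a full proof of Conjecture~\ref{conj:CH} hinges, and I would expect a correct argument to require a genuinely new idea beyond the balance of walks and traces sketched here.
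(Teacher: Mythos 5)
You have set out to prove Conjecture~\ref{conj:CH}, but this is the Caccetta--H\"aggkvist conjecture itself, which the paper does not prove: it is stated as an open conjecture, with only the cases $r=2$ (Theorem~\ref{directed}, due to Caccetta and H\"aggkvist), $r=3$ (Hamidoune), and $r\in\{4,5\}$ (Ho\`ang and Reed) known, and even the case $r=\frac{n}{3}$ still open. The only argument the paper gives involving Conjecture~\ref{conj:CH} is a \emph{reduction}: it shows that Conjecture~\ref{conj:weakaharoni} implies Conjecture~\ref{conj:CH}, via the orientation-to-colouring translation (colour edge $ij$ with colour $i$ when $(i,j)$ is an arc), and then proves only the $r=2$ instance of the stronger rainbow statement (Theorem~\ref{thm:main}). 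So there is no proof in the paper to compare yours against, and your proposal --- as you candidly acknowledge in its final paragraph --- is not a proof either.

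As for the mathematics of your sketch: your diagnoses are accurate, but the decisive step is missing, not merely difficult. The ball-growth attempt fails exactly as you say, since forbidding short directed cycles does not prevent out-neighbourhoods from nesting inside the reachability ball. In the spectral route, the deductions you do make are sound --- $\operatorname{tr}(A^k)=0$ for $1\le k\le g$ because a closed walk of length $k$ contains a directed cycle of length at most $k$, Newton's identities then kill $e_1,\dots,e_g$, and the Perron root satisfies $\rho\ge r$ --- but the concluding claim, that $\rho\ge r$ is incompatible with $g\ge \lceil \frac{n}{r}\rceil$ vanishing power sums on $n$ eigenvalues, is not a lemma you can establish: it is essentially a restatement of the conjecture, and the phase cancellation among complex eigenvalues that you flag is a genuine obstruction, not a technicality. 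Known results along these lines (Shen's additive-constant bound, and the flag-algebra bounds for the triangle case cited in the paper's introduction) confirm that this bookkeeping loses at least an additive constant. One concrete suggestion: if your aim was the statement the paper actually uses as a black box, namely the $r=2$ case (Theorem~\ref{directed}), that case has an elementary combinatorial proof in the original Caccetta--H\"aggkvist paper and requires none of the spectral machinery; attempting that would be a well-posed exercise, whereas the general conjecture is not.
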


A digraph $D$ is \emph{simple} if for all $u,v \in V(D)$ there is at most one arc from $u$ to $v$.
The Caccetta-Häggkvist conjecture has proven to be a notoriously difficult problem. For example, the case $r=\frac{n}{3}$ has received considerable attention~\cite{CH78, GSS92, bondy97, shen98, HHK07, razborov13, lichiardopol13,  HKN17}, but still remains open.  See Sullivan~\cite{sullivan06} for a summary of partial results. 

Although there has been a lot of progress on approximate versions, Conjecture~\ref{conj:CH} is known to hold exactly for only a few values of $r$. The case $r=2$ was actually proved by Caccetta and Häggkvist~\cite{CH78}.  

\begin{thm}[\cite{CH78}] \label{directed}
Every simple $n$-vertex digraph with minimum outdegree at least $2$ contains a directed cycle of length at most $\lceil \frac{n}{2} \rceil$.
\end{thm}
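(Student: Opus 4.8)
The plan is to argue by contradiction. Suppose that every directed cycle of the digraph $D$ has length at least $\lceil \frac{n}{2}\rceil + 1$; equivalently, the directed girth $g$ satisfies $g \ge \lceil \frac{n}{2}\rceil + 1$. Fix a shortest directed cycle $C = v_0 v_1 \cdots v_{g-1} v_0$ and set $U = V(D) \setminus V(C)$, so that
\[
|U| = n - g \le n - \lceil \tfrac{n}{2}\rceil - 1 = \lfloor \tfrac{n}{2} \rfloor - 1 < g.
\]
The strict inequality $|U| < g$ is what will drive the whole argument: there is simply not enough room off the cycle.

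First I would record how the minimality of $C$ restricts arcs leaving $C$. If $v_i \to v_j$ is an arc with $v_j \in V(C)$, then following $C$ from $v_j$ back to $v_i$ closes a directed cycle of length $1 + ((i-j) \bmod g)$, which must be at least $g$; hence $j \equiv i+1 \pmod g$. Thus the only out-neighbour of $v_i$ on $C$ is its successor $v_{i+1}$. Since $D$ is simple with minimum out-degree at least $2$, each $v_i$ therefore has at least one out-neighbour $w_i \in U$. Now the $g$ arcs $v_i \to w_i$ all land in the set $U$ of size less than $g$, so by pigeonhole some vertex $w \in U$ receives arcs from at least two cycle vertices, say $v_i \to w$ and $v_j \to w$ with $i \ne j$. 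The task from here is to convert such a coincidence into a directed cycle shorter than $g$, contradicting the choice of $C$. The same length bookkeeping as above shows that any out-neighbour of $w$ lying on $C$ must sit in a short arc-window immediately after $v_i$ \emph{and} immediately after $v_j$ at once, which already strongly constrains the configuration; and applying the minimum out-degree hypothesis to $w$ itself produces further off-cycle vertices to track.

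The crux, and the only genuinely delicate part, is that a single collision need not yield a short cycle outright: the out-neighbours of $w$ may again lie in the small set $U$ rather than on $C$, so the argument has to be propagated into $U$. I expect the heart of the proof to be an iteration (or potential) argument that assigns to each off-cycle vertex a position relative to $C$ — via shortest directed paths to and from $C$, measured modulo $g$ — and then shows that the deficiency $|U| < g$ forces two such paths, or a chain of off-cycle vertices, to overlap in a way that shortcuts $C$. Making this precise (controlling the residue windows modulo $g$ along chains through $U$, and ruling out every configuration that manages to avoid a short cycle) is where the real work lies.

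The main obstacle, then, is not the initial counting but this propagation step. The extremal construction — a directed cycle of length $\lceil \frac{n}{2}\rceil$ in which each vertex is given one additional out-arc of the appropriate ``length'' — has girth exactly $\lceil \frac{n}{2}\rceil$ and shows the bound is best possible; it also pinpoints precisely which near-tight configurations the argument must barely exclude, and so serves as a useful guide for organizing the case analysis in the propagation step.
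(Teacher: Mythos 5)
There is a genuine gap, and you have in effect flagged it yourself: the ``propagation step'' \emph{is} the theorem, and your write-up does not supply it. What you actually establish is only the easy preamble: if the directed girth $g$ exceeds $\lceil \frac{n}{2}\rceil$ then $|U| = n-g < g$, minimality of $C$ forces every arc from $v_i$ into $C$ to go to $v_{i+1}$, hence every $v_i$ sends an arc into $U$, and by pigeonhole some $w\in U$ has two in-neighbours on $C$. No contradiction follows from a single such collision: as you note, both out-neighbours of $w$ may lie in $U$, and the out-neighbours of those vertices may lie in $U$ again, so nothing yet forces a short chord back onto $C$. The passages ``I expect the heart of the proof to be an iteration (or potential) argument'' and ``Making this precise \dots is where the real work lies'' are an honest admission that the core argument is absent; a reader cannot reconstruct the missing analysis from what is written, and it is far from clear that tracking residue windows modulo $g$ along chains through $U$ closes up without substantial further ideas. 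As it stands this is a proof strategy, not a proof.

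For context: the paper does not prove this statement either. Theorem~\ref{directed} is quoted from Caccetta and H\"aggkvist~\cite{CH78} and used as a black box (the authors even remark in Section~\ref{sec:open} that a proof of Theorem~\ref{thm:main} avoiding it would be interesting), so there is no in-paper argument to compare yours against; the known proofs of the $r=2$ case are genuinely nontrivial. One further small correction: your description of the extremal example is off. The tight digraph is the square of a directed $n$-cycle --- vertices $\mathbb{Z}/n\mathbb{Z}$ with arcs $i\to i+1$ and $i\to i+2$ --- which has $n$ vertices and directed girth $\lceil \frac{n}{2}\rceil$; it is not ``a directed cycle of length $\lceil \frac{n}{2}\rceil$'' with one extra out-arc per vertex.
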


The case $r=3$ was settled positively by Hamidoune~\cite{hamidoune87}, and $r \in \{4,5\}$ by Hoàng and Reed~\cite{HR87}. 

Given a graph $G$ and a colouring $c$ of $E(G)$, we say that a subgraph $H$ of $G$ is \emph{rainbow} if no two edges of $H$ are of the same colour.  
Aharoni (see~\cite{ADH19}) recently proposed the following strengthening of the Caccetta-Häggkvist conjecture. 

\begin{conj}[Aharoni] \label{conj:aharoni}
Let $G$ be a simple $n$-vertex graph and $c$ be a colouring of $E(G)$ with $n$ colours, where each colour class has size at least $r$.  Then $(G,c)$ contains a rainbow cycle of length at most $\lceil \frac{n}{r} \rceil$.
\end{conj}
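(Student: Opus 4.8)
The plan is to study the \emph{rainbow girth} of $(G,c)$, the length of a shortest rainbow cycle, and to show it never exceeds $k := \lceil n/r \rceil$. First I would normalise the instance: deleting edges from oversized colour classes preserves both the number of colours and the lower bound $r$ on class sizes, and any short rainbow cycle in the trimmed graph is also one in $G$, so I may assume every class has size exactly $r$. Then $|E(G)| = nr$ and $G$ has average degree $2r$. The instance now exhibits two competing easy facts that the proof must reconcile. On the \emph{density} side, average degree $2r$ forces a subgraph of minimum degree $\ge r$, hence cycles that are short (of length $O(\log n)$ once $r\ge 3$), far shorter than $k$. On the \emph{rainbow} side, the colour partition is a partition matroid, so matroid intersection with the graphic matroid of $G$ produces large rainbow forests, and any rainbow subgraph whose support has fewer vertices than it has edges contains a rainbow cycle (every cycle inside a rainbow subgraph is rainbow). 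Thus ``short'' alone and ``rainbow'' alone are each attainable; the conjecture demands both at once.

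My main line of attack is a layered expansion argument generalising the proofs known for $r\in\{2,3\}$. Fix a root vertex and grow a tree of rainbow paths outward level by level; at each newly reached vertex its $\ge r$ incident colour classes offer $\ge r$ candidate edges along which to extend. For each path in the tree I would record the set of colours it has already spent and extend only along colours not yet used on that path, discharging the budget of $r$ edges per vertex so that enough extensions survive the colour constraint at every level. Because each step multiplies the reachable set by a factor governed by $r$, the exploration saturates all $n$ vertices quickly, two branches must collide, and the cycle they close is rainbow provided the recorded colour sets interact well. Note that the \emph{length} of this cycle is never the binding constraint: saturation occurs within $O(\log_r n)\ll k$ levels, so collisions yield cycles comfortably below the target $\lceil n/r\rceil$.

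The hard part, and the reason the conjecture remains open for most $r$, is precisely the collision step. Two rainbow paths emanating from the root may each be rainbow yet share a colour, so the cycle formed when they meet need not be rainbow; controlling these \emph{cross-branch} colour repetitions, rather than merely the within-path repetitions, is the genuine obstacle. It is here that Aharoni's conjecture contains Caccetta--H\"aggkvist: colouring each arc of a digraph of minimum out-degree $\ge r$ by its tail makes every rainbow cycle a directed cycle (each of its vertices is a tail exactly once), so a short rainbow cycle is a short directed cycle. Consequently any argument that tames cross-branch collisions for general $r$ would settle Caccetta--H\"aggkvist, which is itself unresolved. I therefore expect the expansion argument to close cleanly only when $r$ is small enough that the colour classes are too sparse to manufacture obstructing collisions; for $r=2$ the classes have just two edges, collisions can be handled by direct case analysis, and this is the regime in which the sharp bound $\lceil n/2\rceil$ can be established.
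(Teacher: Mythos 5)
There is a genuine gap: what you have written is not a proof but a research plan whose central step is explicitly left open. The statement is a conjecture, and the paper does not prove it in general either --- it proves only the case $r=2$ (its main theorem) --- but your proposal does not prove even that case. You correctly identify the cross-branch colour-collision problem as the obstacle and correctly observe that the conjecture implies Caccetta--H\"aggkvist (the paper makes the same reduction, via its Conjecture~\ref{conj:weakaharoni}: colour each arc by its tail, so a properly coloured cycle must be directed). But then you defer the collision step for general $r$ and, for $r=2$, merely assert that ``collisions can be handled by direct case analysis'' without supplying it. That assertion is where all the content lives, and the paper's actual $r=2$ proof looks nothing like a breadth-first expansion argument: it takes a minimal counterexample, finds a vertex at which all incident edges have distinct colours (else orienting each repeated-colour pair away from its vertex reduces to the directed $r=2$ theorem of Caccetta--H\"aggkvist, used as a black box), and then analyses \emph{transversals} --- rainbow subgraphs with one edge per colour, hence $n$ edges on $n$ vertices --- to extract a rainbow theta, bound its chords by two via minimality, and play the theta off against a rainbow cycle edge-disjoint from it in a final counting argument. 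None of this is recoverable from your sketch.

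Beyond the missing core, several of your intermediate claims are unsound as stated. Average degree $2r$ does not give minimum degree $\ge r$ at the vertices your exploration actually visits: individual vertices may have degree far below $r$ (even degree $0$ --- the paper's proof exploits exactly such vertices), so ``at each newly reached vertex its $\ge r$ incident colour classes offer $\ge r$ candidate edges'' fails. Passing to the $r$-core to repair this conflicts with your normalisation: deleting low-degree vertices can shrink colour classes below size $r$ or eliminate colours entirely, and you cannot simultaneously keep class sizes exactly $r$ and minimum degree $\ge r$. Finally, the short cycles guaranteed by density (girth $O(\log n)$ in a min-degree-$3$ subgraph) carry no rainbow guarantee, so the ``density side'' of your dichotomy contributes nothing toward the conclusion; the difficulty is not reconciling two easy facts but producing a cycle that is short \emph{and} rainbow at once, which your argument never does.
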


In fact, we now show that the following weakening of Aharoni's conjecture implies the  Caccetta-Häggkvist conjecture.

\begin{conj} \label{conj:weakaharoni}
Let $G$ be a simple $n$-vertex graph and $c$ be a colouring of $E(G)$ with $n$ colours, where each colour class has size at least $r$.  Then $(G,c)$ contains a cycle $C$ of length at most $\lceil \frac{n}{r} \rceil$ such that no two incident edges of $C$ are the same colour. 
\end{conj}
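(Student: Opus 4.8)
The plan is to prove Conjecture~\ref{conj:weakaharoni} by exhibiting the required cycle as a \emph{fundamental cycle} of a cleverly chosen rainbow forest, proving it outright in the decisive case $r=2$ while isolating precisely what obstructs larger $r$. The reason for this caution is that the statement is at least as hard as Caccetta--H\"{a}ggkvist. Indeed, starting from any simple digraph $D$ of minimum out-degree $r$ and colouring each edge of its underlying graph by its tail yields an instance $(G,c)$ with $n$ colours and every class of size $\ge r$, and in such an instance any cycle whose incident edges avoid colour repetitions must be a directed cycle of $D$: at each vertex of the cycle at most one incident edge can be a tail (two tails would repeat a colour), and counting tails around the cycle then forces exactly one per vertex, i.e.\ a coherent orientation. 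So a uniform proof for all $r$ would resolve Caccetta--H\"{a}ggkvist, and the realistic target is the case $r=2$, where the companion statement (Theorem~\ref{directed}) is already known.

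For $r=2$ I would work with rainbow forests, i.e.\ subforests all of whose edges have distinct colours. Fix a maximal rainbow forest $F$. By maximality, every colour $j$ absent from $F$ has both endpoints of each of its edges inside a single tree of $F$, so each such edge $e$ closes a fundamental cycle $C_e$ formed by $e$ together with the $F$-path joining its ends. Since $F$ is rainbow and the colour $j$ of $e$ does not occur in $F$, the cycle $C_e$ is automatically rainbow, hence \emph{a fortiori} has no two incident edges of the same colour, which is all the conjecture demands. As the length of $C_e$ is $\operatorname{dist}_F(e)+1$, the problem reduces to producing one missing-colour edge whose endpoints lie within $F$-distance $\lceil n/2\rceil-1$. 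The hypothesis that each colour class has at least two edges is the crucial resource here: every missing colour contributes at least two chords of $F$, and I would exploit this either by a direct count bounding the smallest fundamental-cycle length, or --- more promisingly --- by an exchange argument that swaps a long chord into $F$ in place of a tree edge it spans, repeatedly shortening the relevant distances until a short rainbow cycle is forced into view.

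The crux, and the step I expect to be hardest, is converting ``at least two chords per missing colour'' into the sharp bound $\lceil n/2\rceil$. A single maximal rainbow forest need not contain any short fundamental cycle (for instance, if all chords are long diagonals of a path), so the argument must optimise the choice of $F$ over all maximal rainbow forests, or iterate the exchange step while keeping control of distances through the iterations; by the reduction in the first paragraph, any distance-counting scheme robust enough to work verbatim for every $r$ would settle Caccetta--H\"{a}ggkvist, which is exactly why the control must be delicate and tailored to $r=2$. Accordingly I would first pin down the extremal configuration that forces length exactly $\lceil n/2\rceil$, both to confirm tightness and to calibrate the bound, then prove the $r=2$ case, and only afterwards attempt the matroid version, replacing fundamental cycles by fundamental circuits of a maximal rainbow independent set; I expect this to succeed for cographic matroids but, as the abstract signals, to fail for general binary matroids, which should reveal exactly where the graphic structure is being used.
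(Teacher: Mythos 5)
The statement you set out to prove is a conjecture: the paper does not prove Conjecture~\ref{conj:weakaharoni} for general $r$, and the only proof attached to it in the paper is the reduction showing that it implies Conjecture~\ref{conj:CH}. Your first paragraph reproduces that reduction faithfully (colour each edge of the underlying graph by its tail; in a properly edge-coloured cycle at most one incident edge per vertex can be a tail, and counting tails around the cycle forces exactly one per vertex, i.e.\ a coherently directed cycle), so on that component you match the paper. But everything beyond that is a plan with an acknowledged hole rather than an argument, and the hole is the entire theorem.

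Concretely: the step you defer --- converting ``at least two chords per missing colour'' into the bound $\lceil \frac{n}{2}\rceil$ --- is not plausibly closed by your exchange mechanism as described. In the paper's tight example (vertex set $\mathbb{Z}/n\mathbb{Z}$, edges $i(i+1)$ and $i(i+2)$ both coloured $i$), the maximal rainbow forest given by the Hamilton path $0,1,\dots,n-1$ misses only colour $n-1$, and both fundamental cycles of its two chords have length at least $n-1$; any exchange swaps in a chord of the unique missing colour and yields another maximal rainbow spanning tree, and you supply no potential function showing the iteration makes progress rather than cycling. More tellingly, your plan has no answer for the configuration in which \emph{every} vertex sees a repeated colour: a rainbow forest gives no local leverage there, and this is precisely where the paper's proof of the $r=2$ case (Theorem~\ref{thm:main}, which produces fully rainbow cycles, stronger than your properly coloured ones) orients the two same-coloured edges at each vertex away from that vertex and invokes Theorem~\ref{directed} as a blackbox. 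The paper's remaining machinery is also quite different from yours: in a minimum counterexample each colour class has exactly two edges; a transversal ($n$ edges on $n$ vertices) is shown to have exactly one non-trivial block, which must be a cycle or a theta; a vertex all of whose incident edges have distinct colours yields a rainbow theta; a minimum rainbow theta is shown to have at most two chords; and a rainbow cycle edge-disjoint from the theta feeds a final count. Note finally that the paper explicitly raises, in Section~\ref{sec:open}, the problem of finding a proof of Theorem~\ref{thm:main} that avoids Theorem~\ref{directed}; your forest-exchange route, if completed, would answer that open problem, which is strong evidence that the step you label as ``the crux'' is not a routine detail but the substance of the result.
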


\begin{proof}[Proof of Conjecture~\ref{conj:CH}, assuming Conjecture~\ref{conj:weakaharoni}]
Let $D$ be a simple digraph of order $n$ and minimum outdegree at least $r$. Let $G$ be the graph obtained from $D$ by forgetting the orientations of all arcs.  Let $V(G)=[n]$ and colour $ij \in E(G)$ with colour $i$ if $(i,j) \in E(D)$.  Clearly, this colouring uses $n$ colours.  Moreover, since $D$ has minimum outdegree at least $r$, each colour class has size at least $r$.  Therefore, by Conjecture~\ref{conj:weakaharoni}, $G$ contains a properly edge-coloured cycle $C$ of length at most $\lceil \frac{n}{r} \rceil$. Let $\vec{C}$ be the subdigraph of $D$ corresponding to $C$.  We claim that $\vec{C}$ is a directed cycle.  If not, then there exists $i,j,k \in V(\vec{C})$ such that $(j,i) \in E(\vec{C})$ and $(j,k) \in E(\vec{C})$.  Thus, the two edges of $C$ incident to vertex $j$ are the same colour. This contradicts that $C$ is properly edge-coloured.  
\end{proof}

Our main theorem is that Aharoni's conjecture holds for $r=2$.

\begin{thm} \label{thm:main}
Let $G$ be a simple $n$-vertex graph and $c$ be a colouring of $E(G)$ with $n$ colours, where each colour class has size at least $2$.  Then $(G,c)$ contains a rainbow cycle of length at most $\lceil \frac{n}{2} \rceil$.
\end{thm}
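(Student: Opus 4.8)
The plan is to argue by contradiction through a minimal counterexample and then to analyze a shortest rainbow cycle, in the spirit of the proof of Theorem~\ref{directed}. First I would take a counterexample $(G,c)$ minimizing $|E(G)|$ for the given $n$. If some colour class had at least three edges, deleting one of them would leave a smaller counterexample; hence every colour class has exactly two edges and $|E(G)| = 2n$. Next I would clean up low-degree vertices. A vertex $v$ of degree at most $1$, or of degree $2$ whose two edges share a colour, lies on no rainbow cycle; in either case deleting $v$ together with the (at most one) partner edge needed to erase the affected colour class removes exactly one vertex and one colour, producing a smaller instance that still satisfies the hypotheses and so contradicts minimality. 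Thus I may assume that $G$ has minimum degree at least $2$ and that no degree-$2$ vertex meets a monochromatic pair.

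The heart of the argument is to fix a shortest rainbow cycle $C$, of length $g$, and to assume $g \ge \lceil \tfrac{n}{2}\rceil + 1$. Since $C$ is rainbow it uses $g$ distinct colours. The key local observation is that a \emph{chord} of $C$ (a non-cycle edge with both ends on $C$) whose colour does not already occur on $C$ would split $C$ into two arcs and create two rainbow cycles whose lengths sum to $g+2$; the shorter has length at most $\lfloor \tfrac{g}{2}\rfloor + 1 < g$ (as $g \ge 3$, using that $G$ is simple so the arcs have length at least $2$), contradicting minimality of $g$. Hence every chord repeats a colour already on $C$, and since each colour has exactly two edges, a chord of colour $\gamma$ is precisely the \emph{second} edge of the unique $C$-edge of colour $\gamma$. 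Consequently the $n-g$ colours \emph{absent} from $C$ cannot appear on chords at all, so each of their $2(n-g)$ edges has an endpoint among the fewer than $\tfrac{n}{2}$ vertices lying off $C$.

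From here I would push to a contradiction by a counting-and-rigidity argument. Each chord $xy$ of colour $\gamma$ can be closed along either arc of $C$ between $x$ and $y$; the resulting cycle is rainbow exactly when that arc avoids the $C$-edge of colour $\gamma$, and it is short when that arc is short. The aim is to show that, with $g > \tfrac{n}{2}$ and thus very few vertices off $C$, the edges forced by the $n-g$ absent colours together with the chords cannot all be positioned so that every short arc is blocked by its own colour-partner: any failure of this blocking yields a rainbow cycle shorter than $g$. The only configuration surviving the analysis should be the rigid circulant-type pattern (morally, arcs $i \to i+1$ and $i \to i+2$) underlying the extremal example, and that pattern already contains a rainbow cycle of length exactly $\lceil \tfrac{n}{2}\rceil$, again contradicting $g \ge \lceil \tfrac{n}{2}\rceil + 1$.

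The main obstacle is precisely this last step: coordinating \textbf{shortness} with the \textbf{rainbow} constraint. In the directed setting of Theorem~\ref{directed} any chord of a shortest directed cycle immediately produces a shorter one, but here a chord may reuse a colour already on $C$, so closing it up need not stay rainbow. Controlling, simultaneously over all chords, the position of each colour-partner relative to the two arcs — and thereby excluding the near-extremal rigid structure — is the technical crux. I expect this to require a genuinely global count of colours appearing on versus off $C$, rather than a purely local chord-swapping argument, and it is where a naive adaptation of the Caccetta--Häggkvist proof breaks down.
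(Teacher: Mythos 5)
There is a genuine gap: your argument stops exactly at the step that carries all the difficulty. Everything up to and including the chord analysis of a shortest rainbow cycle $C$ is fine (minimal counterexample, each colour class of size exactly $2$, every chord of $C$ must be the colour-partner of an edge of $C$, the $2(n-g)$ edges of absent colours have an endpoint off $C$), but none of this is close to a contradiction: it is consistent with many configurations, and the ``counting-and-rigidity argument'' that is supposed to finish is only described as an aim, with the admission that you do not know how to carry it out. A proof attempt whose final step is ``the only surviving configuration should be the extremal circulant pattern'' without a proof of that claim is not a proof. Two smaller issues: you take a shortest rainbow cycle without first establishing that a rainbow cycle exists (it does --- any transversal, one edge per colour, has $n$ edges on $n$ vertices and hence contains a cycle --- but this needs to be said); and your minimality is declared ``for the given $n$'' while your low-degree cleanup changes $n$, so the induction needs to be set up on $n$ (or lexicographically) rather than on $|E(G)|$ alone.

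For comparison, the paper closes the argument with two ideas absent from your proposal. First, the rigid near-extremal pattern you worry about is dispatched at the outset: if every vertex is incident with two edges of the same colour, then (since there are $n$ colours and $n$ vertices) each vertex has exactly one repeated colour pair; orienting both edges of each pair away from their vertex gives a simple digraph of minimum outdegree $2$, and Theorem~\ref{directed} produces a directed, hence rainbow, cycle of length at most $\lceil \frac{n}{2}\rceil$. This is precisely the global mechanism that excludes the circulant-type extremal structure, and it is borrowed from the directed case rather than re-derived. Second, in the remaining case some vertex $v$ sees all-distinct colours, so a transversal isolating $v$ has $n$ edges on at most $n-1$ non-isolated vertices and therefore contains a rainbow \emph{theta}; the paper then works with a vertex-minimal rainbow theta $\theta$ (showing it has at most two chords) and plays it against a rainbow cycle edge-disjoint from it, obtained from a transversal of $G\setminus E(\theta)$, to bound $|V(\theta)|$ and extract a short cycle from $\theta$ itself. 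Your shortest-rainbow-cycle-plus-chords framework replaces this theta machinery with a structure that gives you less to work with, and as you yourself observe, the chord-swapping that powers the Caccetta--H\"aggkvist argument does not survive the rainbow constraint.
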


The rest of the paper is organized as follows.  In Section~\ref{sec:proof}, we prove our main theorem.  We show that our bound is tight in Section~\ref{sec:tight}, and that there is a sharp increase in the `rainbow girth' as the number of colours decreases from $n$.  In Section~\ref{sec:matroids}, we show that the natural matroid generalization of Theorem~\ref{thm:main} holds for cographic matroids, but fails for binary matroids. 
We conclude with some open problems in Section~\ref{sec:open}.

\section{Proof of the Main Theorem} \label{sec:proof}

In this section, we prove Theorem~\ref{thm:main}.  Before proceeding, we require some basic definitions. Let $G$ be a graph.
A \emph{cut-vertex} of $G$ is a vertex $v$ such that $G-v$ has more connected components than $G$.  A \emph{block} of $G$ is a maximal subgraph $H$ such that $H$ has no cut-vertices.  A block is \emph{non-trivial} if it has at least three vertices.  An \emph{ear-decomposition} of $G$ is collection of subgraphs $\{H_0, H_1, \dots, H_k\}$ of $G$ satisfying $G=H_0 \cup H_1 \cup \dots \cup H_k$, $H_0$ is a cycle, and $H_i$ is a path such that |$V(H_i)| \geq 2$ and $V(H_i) \cap \bigcup_{j=0}^{i-1} V(H_j)$ is the set of ends of $H_i$ for all $i \in [k]$. It is well-known that a graph is $2$-connected if and only if it has an ear-decomposition.  The paths $H_1, \dots, H_k$ are the \emph{ears} of the ear-decomposition.  A \emph{theta} is a graph which has an ear-decomposition with exactly one ear.

Given an edge-coloured graph $(G,c)$, a \emph{transversal} of $(G,c)$ is a subgraph $H$ of $G$ such that $V(H)=V(G)$ and $E(H)$ contains exactly one edge of each colour. In particular, a transversal is a rainbow subgraph (which may contain isolated vertices).  

\begin{proof}[Proof of Theorem~\ref{thm:main}]
Suppose the theorem is false and let $(G,c)$ be a counterexample with $|E(G)|$ minimum. By minimality, each colour class contains exactly two edges.   
 We claim that $G$ contains a vertex $v$ such that all edges incident to $v$ have different colours (note that an isolated vertex satisfies this vacuously).  If not, then at each vertex, there is at least one colour that appears twice.  Since there are only $n$ colours, at each vertex there is exactly one colour that appears twice.  For each vertex $v$, let $e_v$ and $f_v$ be the two edges incident to $v$ that have the same colour.  For each $v$, we orient $e_v$ and $f_v$ away from $v$ and apply Theorem~\ref{directed} to find a directed cycle of length at most $\lceil \frac{n}{2} \rceil$.  This corresponds to a rainbow cycle in $G$, which contradicts that $(G,c)$ is a counterexample.

Let $H$ be an arbitrary transversal of $(G,c)$. Since $H$ has $n$ edges and $n$ vertices, it follows that $H$ contains at least one cycle and hence at least one non-trivial block. If $H$ contains two non-trivial blocks, then $H$ contains two rainbow cycles that meet in at most one vertex, and thus a cycle of length at most $\lceil \frac{n}{2} \rceil$. However, this would contradict that $(G,c)$ is a counterexample.  Therefore, $H$ contains exactly one non-trivial block $B$. Suppose $B$ has an ear decomposition with at least two ears.  In this case, $|V(B)| \leq n-2$, since $B$ contains at most $n$ edges. Moreover, $B$ contains a subgraph $B'$ which is either two cycles meeting in at most two vertices, or a subdivision of $K_4$.  If the former holds, then $B'$ contains a cycle of length at most $\lfloor \frac{|V(B')|+2}{2} \rfloor \leq \lceil \frac{n}{2} \rceil$.  If the latter holds, then $B'$ contains four cycles $C_1, \dots, C_4$ such that $\sum_{i \in [4]} |V(C_i)|=2|V(B)'|+4 \leq 2n$.  Thus, one of these four cycles has length at most  $\lceil \frac{n}{2} \rceil$. Since $(G,c)$ is a counterexample, $B$ contains at most one ear.  That is, $B$ is a cycle or a theta.  It follows that every transversal of $(G,c)$ is either a connected graph with exactly one cycle, or the disjoint union of a tree and a graph containing a theta. 

\begin{clm}
$(G,c)$ contains a rainbow theta.
\end{clm}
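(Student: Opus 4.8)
The plan is to argue by contradiction: assuming $(G,c)$ has no rainbow theta (and recalling that, being a counterexample, it has no rainbow cycle of length at most $\lceil n/2 \rceil$), I will produce two equally coloured edges at the special vertex $v$, contradicting its choice. First I note that if some transversal has its unique non-trivial block equal to a theta, then that block is already a rainbow theta and we are done; hence I may assume every transversal is connected and unicyclic. Since \emph{any} selection of one edge per colour is a transversal, I have complete freedom in building $H$. Because every edge at $v$ has a distinct colour, the partner of each such edge lies away from $v$; choosing those partners for all but one of the colours at $v$, and keeping a single edge at $v$, yields a transversal $H$ in which $v$ is a leaf. (If instead $v$ is isolated in $G$, then no transversal is connected, so every transversal falls in the theta case and we are done.) Let $C$ be the unique cycle of $H$; since $C$ is rainbow, the counterexample hypothesis gives $|C| > \lceil n/2 \rceil$.

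The crux is the following exchange analysis. For an edge $e=xy \notin H$ with partner $e' \in H$, consider $H+e$; it is connected and has exactly two independent cycles, so after suppressing degree-two vertices it is either a theta or a pair of cycles meeting in at most one vertex. In the first case $C$ together with a cycle $D$ through $e$ forms a theta, and since the only repeated colour in $H+e$ is the pair $\{e,e'\}$, this theta is rainbow unless it contains $e'$. In the second case, writing $D$ for the cycle through $e$, the two cycles satisfy $|C|+|D| \le n+1$, so $|D| \le \lfloor n/2 \rfloor$ because $C$ is long; thus $D$ is a short cycle, rainbow unless it contains $e'$. In either case, the absence of a rainbow theta and of a short rainbow cycle forces $e'$ to lie on $C$ or on the path of $H$ joining $x$ and $y$.

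I then apply this to the leaf edge $t=vw$ at $v$. Its partner $\bar t \notin H$ is an edge whose own partner $t$ is a tree edge of $H$, so the exchange analysis applied to $e=\bar t$ forces $t$, which is not on $C$, to lie on the $H$-path between the two ends of $\bar t$. But $t$ is a leaf edge, so every path of $H$ through $t$ has $v$ as an endpoint; hence $\bar t$ is incident to $v$. Then $t$ and $\bar t$ are two edges at $v$ of the same colour, contradicting that all edges at $v$ are differently coloured, and this contradiction establishes the claim. The main obstacle is the exchange analysis of the middle paragraph—correctly splitting into the theta and two-cycle cases and using the bound $|C| > \lceil n/2 \rceil$—together with the preliminary insight that the freedom to choose any one-edge-per-colour transversal lets me place $v$ as a leaf, which is exactly what collapses the path condition to adjacency at $v$.
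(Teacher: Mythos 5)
Your proof is correct, but it takes a much longer route than the paper's. The paper's entire argument is: since every colour appearing at $v$ has its second edge away from $v$ (two same-coloured edges at $v$ would contradict the choice of $v$), one can choose the transversal to avoid \emph{all} edges at $v$; then $v$ is isolated, the transversal is disconnected, and the dichotomy established just above the claim (connected unicyclic, or tree plus theta-containing component) immediately forces a theta, which is rainbow because transversals are rainbow. You noticed exactly the needed fact --- that the partner of each edge at $v$ lies away from $v$ and that you have complete freedom in assembling the transversal --- but then chose all-but-one of those partners instead of all of them; taking the last one as well would have ended the proof in one line. Instead, by keeping $v$ as a leaf you stay in the connected unicyclic case and must run an exchange argument on the pendant edge $t$ and its partner $\bar t$, analysing the two-cycle structure of $H+\bar t$ to force $\bar t$ to be incident to $v$. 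That analysis is sound (it is essentially a local version of the chord analysis the paper performs later in Claim~\ref{clm:chord}), though you should be slightly careful that when the ends of $\bar t$ attach to the cycle $C$ at two distinct vertices there are \emph{two} $H$-paths between them; in that situation both resulting cycles share an arc with $C$, so you land in the theta case and the partner $t$ is forced onto the common part $P_x\cup P_y$ of the two paths, which still has $v$ as an endpoint once it contains the leaf edge $t$ --- so your conclusion survives, but the phrase ``the path of $H$ joining $x$ and $y$'' glosses over this. In short: correct, genuinely different in execution, but the detour buys nothing that the one-line isolation argument does not already give.
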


\begin{proof}[Subproof]
Since $G$ contains a vertex $v$ such that all edges incident to $v$ have different colours, there is a transversal $H$ of $(G,c)$ such that $v$ is an isolated vertex in $H$.  It follows that the other component of $H$ contains a theta. In particular, $(G,c)$ contains a rainbow theta.
\end{proof}

 The rest of the proof only uses the fact that $(G,c)$ contains a rainbow theta.  Let $\theta$ be a rainbow theta in $(G,c)$ with $|V(\theta)|$ minimum.  Let $P_1,P_2$ and $P_3$ be paths in $\theta$ such that $\theta=P_1 \cup P_2 \cup P_3$, $V(P_i) \cap V(P_j):=\{x,y\}$ for all $i \neq j$. and $|V(P_1)| \leq |V(P_2)| \leq |V(P_3)|$.  
 
 \begin{clm} \label{clm:thetabound}
 $\theta$ contains a cycle of length at most $\lfloor \frac{2|V(\theta)|+2}{3} \rfloor$.  Moreover, if $|V(\theta)|=3k+2$, then $\theta$ contains a cycle of length at most $2k+1$, unless $|V(P_1)| = |V(P_2)| = |V(P_3)|$.
 \end{clm}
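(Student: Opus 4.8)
The plan is to reduce the entire claim to a one-line averaging argument over the three cycles of $\theta$. First I would set $v_i := |V(P_i)|$ and record two elementary counts. Since $x$ and $y$ are the only vertices shared among the three paths, each $P_i$ contributes $v_i - 2$ internal vertices, so
\[
|V(\theta)| = 2 + \sum_{i=1}^{3}(v_i - 2) = v_1 + v_2 + v_3 - 4 .
\]
Next, note that $\theta$ contains exactly three cycles, namely $C_{ij} := P_i \cup P_j$ for $\{i,j\} \subseteq \{1,2,3\}$; each is a genuine cycle because $P_i$ and $P_j$ meet only in $\{x,y\}$. Counting vertices (which equals the number of edges for a cycle) gives $|C_{ij}| = v_i + v_j - 2$.

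Summing over the three pairs, each $v_i$ appears twice, so
\[
|C_{12}| + |C_{13}| + |C_{23}| = 2(v_1 + v_2 + v_3) - 6 = 2|V(\theta)| + 2 .
\]
Hence the shortest of these three cycles has length at most $\big\lfloor \tfrac{2|V(\theta)|+2}{3} \big\rfloor$, which is the first assertion. This is really the whole content of the first half: the identity for the sum of the three cycle lengths, followed by pigeonhole.

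For the ``moreover'', I would put $|V(\theta)| = 3k+2$, so the three cycle lengths are positive integers summing to $6k+6 = 3(2k+2)$. If they are not all equal, then the minimum is strictly below the average $2k+2$, hence at most $2k+1$. It then remains to translate ``all three cycle lengths equal'' back to the paths: from $|C_{ij}| = v_i + v_j - 2$ one reads off $|C_{12}| = |C_{13}| \iff v_2 = v_3$, and similarly for the other pairs, so the three cycle lengths coincide exactly when $v_1 = v_2 = v_3$, i.e.\ when $|V(P_1)| = |V(P_2)| = |V(P_3)|$.

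I do not expect a real obstacle here, since everything rests on the identity $\sum_{i<j}|C_{ij}| = 2|V(\theta)|+2$ together with averaging. The only point needing a moment of care is the integrality step in the ``moreover'': confirming that a triple of integers summing to $3m$ and not all equal has minimum at most $m-1$. This is immediate, because if the minimum were $m$ then all three would be at least $m$, forcing the sum to exceed $3m$ unless all three equalled $m$.
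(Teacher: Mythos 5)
Your proof is correct and is essentially the same argument as the paper's: the paper simply exhibits $P_1\cup P_2$ (the union of the two shortest paths) as the short cycle, which is exactly the cycle your averaging over the three pairs $P_i\cup P_j$ produces. Both rest on the identity $\sum_{i<j}|V(P_i\cup P_j)|=2|V(\theta)|+2$ and the observation that equality in the pigeonhole step forces $|V(P_1)|=|V(P_2)|=|V(P_3)|$.
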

 
 \begin{proof}[Subproof]
 $P_1 \cup P_2$ is a cycle of length at most $\lfloor \frac{2|V(\theta)|+2}{3} \rfloor$.  Moreover, if $|V(\theta)|=3k+2$, then $P_1 \cup P_2$ is a cycle of length at most $2k+1$, unless $|V(P_1)| = |V(P_2)| = |V(P_3)|$.
 \end{proof}

 A \emph{chord} of $\theta$ is an edge $e \in E(G) \setminus E(\theta)$ such that both ends of $e$ are in $V(\theta)$.    
 \begin{clm} \label{clm:chord}
  $\theta$ has at most two chords. 
 \end{clm}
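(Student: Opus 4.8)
Suppose toward a contradiction that $\theta$ has at least three chords. The plan is to combine three ingredients: the block structure of transversals established above, the minimality of $\theta$, and the fact that $(G,c)$ has no short rainbow cycle. First I would pin down the colours of the chords. I claim every chord $e$ has its colour already appearing on $\theta$. Indeed, if $c(e)$ were a colour not used by $\theta$, then $\theta+e$ would be a rainbow, $2$-connected subgraph with three independent cycles. Extending $\theta+e$ to a transversal $H$ (greedily picking one edge of each remaining colour and leaving the uncovered vertices isolated) places $\theta+e$ inside a single non-trivial block of $H$, which would then have at least three independent cycles; this contradicts the established fact that the unique non-trivial block of any transversal is a cycle or a theta. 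Hence each chord $e$ has a well-defined \emph{partner} $f(e)\in E(\theta)$, the unique other edge of colour $c(e)$. Since each colour class has exactly two edges, distinct chords have distinct partners.

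Next I would locate the partners using minimality. Write $C_{ij}=P_i\cup P_j$ for the three cycles of $\theta$. A single edge meets the interiors of at most two of the three paths, so both ends of any chord $e$ lie on some $C_{ij}$; then $C_{ij}+e$ is again a theta, on the vertex set $V(C_{ij})$, omitting the interior of the third path $P_k$. As $|V(P_2)|,|V(P_3)|\geq 3$, this interior is nonempty whenever $k\in\{2,3\}$, so $C_{ij}+e$ is a \emph{strictly smaller} theta; by minimality of $\theta$ it cannot be rainbow, and since $C_{ij}$ is rainbow this forces $f(e)\in E(C_{ij})$. When $e$ is simultaneously a chord of two of the cycles (for instance when both ends lie in the interior of one path, or when one end is a branch vertex), intersecting the two resulting constraints pins $f(e)$ into the edges of a single path $P_i$. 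The degenerate case $|V(P_1)|=2$, where $P_1$ is a single edge and $C_{23}+e$ need not shrink, I would treat separately, exploiting that then $P_2$ and $P_3$ are long.

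Finally, to exclude a third chord I would invoke that $\theta$ sits inside a counterexample: every cycle of $\theta$ is rainbow and hence longer than $\lceil\frac{n}{2}\rceil$, so by Claim~\ref{clm:thetabound} $|V(\theta)|$ is large (more than about $\frac{3}{4}n$) and even the shortest cycle $P_1\cup P_2$ exceeds $\lceil\frac{n}{2}\rceil$. A chord, together with short arcs of $\theta$ and the third path, creates several cycles, and any such cycle avoiding the chord's partner is rainbow and must therefore be long; this restricts how each chord's endpoints split the paths. Combining the (distinct, path-localized) partners of three chords—via a pigeonhole on the three cycles $C_{12},C_{13},C_{23}$, together with the observation that two chords of a common cycle produce either two cycles sharing a path (a smaller rainbow theta) or a subdivision of $K_4$ (four cycles of total length at most $2n$, hence a short one)—should force either a rainbow cycle of length at most $\lceil\frac{n}{2}\rceil$ or a rainbow theta smaller than $\theta$, the desired contradiction. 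I expect this last step to be the main obstacle: the case analysis is delicate because partners may lie anywhere along their paths, and chords incident to the branch vertices $x,y$ or to the possibly-degenerate path $P_1$ must be handled individually; the real work is choosing, for each configuration of three chords, the right short cycle or sub-theta to exhibit.
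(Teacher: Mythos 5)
Your proposal has the right overall shape --- show that each chord's colour already appears on $\theta$, use minimality of $\theta$ to locate the repeated edge, then pigeonhole over three chords --- and your first step is correct and even a nice alternative to the paper's: extending the rainbow graph $\theta+e$ to a transversal whose unique non-trivial block would then contain three independent cycles does contradict the block structure established earlier (the paper instead argues directly with cycle counts inside $\theta+e$). However, there is a genuine gap at precisely the point you defer as ``the main obstacle.'' Localizing the partner $f(e)$ only to $E(C_{ij})$, or even to a single path $P_i$, is far too weak for any pigeonhole over three chords to close: partners could sit anywhere along their paths, the configurations do not obviously collapse to a short rainbow cycle or a smaller rainbow theta, and you give no argument that they do. You also leave the degenerate case $|V(P_1)|=2$ unresolved.

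The missing idea is a much sharper localization of the partner. In the paper, a chord with both ends on a single $P_i$ is ruled out outright: the cycle cut off by $e$ inside $P_i$ must be rainbow (otherwise swapping it in gives a smaller rainbow theta), and together with the disjoint rainbow cycle $P_j\cup P_k$ one obtains two rainbow cycles meeting in at most two vertices, hence one of length at most $\lfloor(|V(\theta)|+2)/2\rfloor\le\lceil \frac{n}{2}\rceil$. For a chord $e$ joining the interiors of $P_1$ and $P_2$, the ends of $e$ split $P_1\cup P_2$ into four subpaths; deleting any one of them from the $K_4$-subdivision $\theta+e$ leaves a theta containing $e$, and the unique deletion that removes the partner $e'$ leaves a \emph{rainbow} theta, which by minimality of $|V(\theta)|$ cannot have fewer vertices --- forcing that subpath to be the single edge $e'$, i.e.\ $e'$ is incident to $e$ and its other end lies in $\{x,y\}$. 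This is exactly what makes the endgame a two-line pigeonhole: among three chords, two have partners incident to the same branch vertex $x$, and $(\theta\cup\{e_1,e_2\})\setminus\{e_1',e_2'\}$ contains a rainbow theta avoiding $x$, contradicting minimality. Without establishing that the partner is adjacent to its chord at a branch vertex, your plan does not yet constitute a proof.
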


 \begin{proof}[Subproof]
 Note that $|V(\theta)| \leq n-1$, since $\theta$ is rainbow and thus contains at most $n$ edges.  
Let $e$ be a chord. First suppose $e$ has both endpoints on some $P_i$.  Let $C_i$ be the unique cycle in $P_i \cup \{e\}$.  Note that $C_i$ is rainbow, otherwise $(\theta \setminus E(C_i)) \cup \{e\}$ contradicts the minimality of $|V(\theta)|$. Therefore, $\theta \cup \{e\}$ contains two rainbow cycles meeting in at most two vertices. One of these two cycles has length at most $\lfloor \frac{|V(\theta)|+2}{2} \rfloor \leq \lfloor \frac{n+1}{2} \rfloor  = \lceil \frac{n}{2} \rceil $.  
 
 By symmetry, we may assume that the ends of $e$ are on $P_1$ and $P_2$. Suppose $e$ is coloured red.  If $P_1 \cup P_2$ does not contain a red edge, then $\theta \cup \{e\}$ contains rainbow cycles $C_1, \dots, C_4$ such that $\sum_{i \in [4]} |V(C_i)|=2|V(\theta)|+4 \leq 2n+2$.  Thus, one of these cycles has length at most $\lceil \frac{n}{2} \rceil $.
  It follows that some edge $e'$ of $P_1 \cup P_2$ is also red.  By the minimality of $|V(\theta)|$, this is only possible if $e$ and $e'$ are incident and one end of $e'$ is in $\{x,y\}$.  
  
  If $\theta$ has at least three chords, then by symmetry and the pigeonhole principle, we may assume there exist chords $e_1$ and $e_2$ such that $e_1'$ and $e_2'$ are both incident to $x$. But now $(\theta \cup \{e_1, e_2\}) \setminus \{e_1', e_2'\}$ contains a rainbow theta with fewer vertices than $\theta$.
 \end{proof}

Since $G \setminus E(\theta)$ contains a transversal, there is a rainbow cycle $C$ that is edge-disjoint from $\theta$. Let $V_1=V(\theta) \setminus V(C)$, $V_2=V(\theta) \cap V(C)$, and $V_3=V(C) \setminus V(\theta)$. Since $(G,c)$ is a counterexample, $|V(C)|=|V_2|+|V_3| \geq \lceil \frac{n}{2} \rceil +1 $.  Let $t$ be the number of chords of $\theta$. Note that $t \leq 2$ by Claim~\ref{clm:chord}.  Observe that if $a,b \in V_2$ and $ab \in E(C)$, then $ab$ is a chord of $\theta$.  Therefore, $C[V_2]$ contains at most $t$ edges.  It follows that $|V_2| \leq \lfloor \frac{|V(C)|}{2} \rfloor +t$, or equivalently $|V_3|+t \geq |V_2|$.  Therefore, $|V_3| \geq \frac{ \lceil \frac{n}{2} \rceil +(1-t)}{2}$, and 
\[
|V(\theta)| \leq n-|V_3| \leq n-\frac{ \lceil \frac{n}{2} \rceil +(1-t)}{2} \leq n-\frac{ \lceil \frac{n}{2} \rceil -1}{2},
\]
where the last inequality follows since $t \leq 2$. 

Combining the bound $|V(\theta)| \leq n-\frac{ \lceil \frac{n}{2} \rceil -1}{2}$ with Claim~\ref{clm:thetabound}, we are done unless $n \equiv 2 \pmod 4$ and all the above bounds are tight. In particular, $t=2$, $n=4k+2$, $|V_1|=2k$, $|V_2|=k+2$, $|V_3|=k$.  Moreover, by the second part of Claim~\ref{clm:thetabound}, each of $P_1, P_2$, and $P_3$ contains exactly $k+2$ vertices.  Let $e$ be a chord of $\theta$ and $e'$ be the edge of $\theta$ of the same colour as $e$.  By the second part of Claim~\ref{clm:thetabound}, $\theta':=(\theta \setminus \{e'\}) \cup \{e\}$ contains a cycle of length at most $2k+1=\frac{n}{2}$ vertices, as required. 
\end{proof}

\section{Tightness of the Bound} \label{sec:tight}

We now show that our bound is tight, and that there is a dramatic change of behaviour as we decrease the number of colours from $n$.  To be precise, define the \emph{rainbow girth} of an edge-coloured graph $(G,c)$, denoted $\rg (G,c)$, to be the length of a shortest rainbow cycle in $(G,c)$. If $(G,c)$ does not contain a rainbow cycle, then $\rg (G,c)=\infty$. Let 
\[
f(n,t):=\max \{\rg (G,c) : \text{$|V(G)|=n, |E(G)|=2t$, each colour class of $c$ has size $2$}\}.
\]

\begin{thm} \label{thm:sharp}
For all $n \geq 3$ and $t \leq n$, 
\[
\begin{cases}
f(n, t)=\infty & \text{if $t \leq n-2$,} \\
f(n,t)=n-1 &\text{if $t=n-1$,} \\
f(n,t)= \lceil \frac{n}{2} \rceil &\text{if $t=n$}.
\end{cases}
\]
\end{thm}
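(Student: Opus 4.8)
The plan is to treat the three regimes separately, in each case pinning down the upper bound on $\rg$ and then producing an extremal colouring. Two easy facts will be used throughout: a rainbow cycle uses distinct colours, so with $t$ colours every rainbow cycle has length at most $t$; and for $t=n$ the upper bound $f(n,n)\le\lceil\frac n2\rceil$ is exactly Theorem~\ref{thm:main}. I also keep feasibility in mind, since a simple $n$-vertex graph has at most $\binom n2$ edges: the regime $t=n$ is non-vacuous only for $n\ge 5$ and $t=n-1$ only for $n\ge 4$, and the finitely many smaller cases are checked by hand. For the range $t\le n-2$ it suffices to exhibit a single colouring with no rainbow cycle. I would take the generalized theta (``banana'') graph: two vertices $a,b$ joined by $t$ internally disjoint length-two paths $a-m_i-b$, plus $n-t-2\ge 0$ isolated vertices, colouring path $i$ monochromatically via $c(am_i)=c(m_ib)=i$. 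This is simple, has $2t$ edges and $n$ vertices with each colour class of size $2$, and since every $m_i$ has degree $2$, any cycle through $m_i$ uses both edges of colour $i$; as every cycle meets some $m_i$, no cycle is rainbow, so $\rg=\infty$ and $f(n,t)=\infty$. (Monotonicity of $f$ in $t$ gives an alternative, reducing everything to $t=n-2$.)

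The case $t=n-1$ has two halves. For the upper bound the real point is that a rainbow cycle must exist; once it does, it has length at most $n-1$. I argue by contradiction: if $(G,c)$ has no rainbow cycle, then every transversal is rainbow and acyclic, hence, having $n-1$ edges on $n$ vertices, is a spanning tree; in particular $G$ is connected and no transversal has an isolated vertex. But there are only $n-1<n$ colour classes, so at most $n-1$ vertices arise as the common endpoint of a monochromatic pair, and some vertex $v$ meets only distinctly coloured edges. Choosing, for each colour, an edge not incident to $v$ (possible, as no colour has both edges at $v$) produces a transversal isolating $v$, a contradiction. For the lower bound I colour the wheel $W_{n-1}$ (rim $1\cdots(n-1)$ and hub $n$) by assigning the rim edge $\{a,a+1\}$ and the spoke $\{n,a\}$ the common colour $a$. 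Then the rim is a rainbow cycle of length $n-1$, whereas any cycle through the hub uses two spokes and a rim arc $R$ between their feet $a,b$; $R$ must contain $\{a,a+1\}$ or $\{b,b+1\}$, which repeats the colour of the corresponding spoke. Hence the rim is the unique rainbow cycle and $\rg=n-1$, giving $f(n,n-1)=n-1$.

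Finally, for $t=n$ the upper bound is Theorem~\ref{thm:main}, and I match it with the squared cycle $C_n\langle 1,2\rangle$ on vertices $\mathbb{Z}_n$, with edges $\{i,i+1\}$ and $\{i,i+2\}$, colouring both edges leaving $i$ with colour $i$. This is precisely the colouring obtained from the Caccetta--H\"aggkvist extremal digraph (arcs $i\to i+1$, $i\to i+2$) via the reduction in the introduction. The crux is the correspondence between rainbow cycles of $(G,c)$ and directed cycles of that digraph: a cycle is rainbow iff no vertex carries two of its edges of the same colour iff no vertex has two outgoing arcs on the cycle iff the induced orientation is a directed cycle. Since the digraph has minimum out-degree $2$, its directed girth equals $\lceil\frac n2\rceil$ (tightness of Theorem~\ref{directed}, or directly: a directed cycle is a string of $+1/+2$ steps whose total is a positive multiple of $n$, hence needs at least $\lceil\frac n2\rceil$ steps, with equality attainable). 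Therefore $\rg(G,c)=\lceil\frac n2\rceil$ and $f(n,n)=\lceil\frac n2\rceil$.

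I expect the main obstacle to be the two ``no shorter rainbow cycle'' verifications for the extremal colourings. Making the rainbow-cycle/directed-cycle correspondence for $C_n\langle 1,2\rangle$ fully precise (including checking simplicity and well-definedness of the colouring, which need $n\ge 5$) and confirming that the wheel colouring kills every hub cycle are where the real care lies; the finiteness argument for $t=n-1$, although it is the conceptual heart of the statement, collapses cleanly once one observes that a transversal is forced to be a spanning tree yet can be arranged to isolate a vertex.
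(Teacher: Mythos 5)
Your proposal is correct and follows essentially the same route as the paper: Theorem~\ref{thm:main} plus the coloured squared cycle on $\mathbb{Z}/n\mathbb{Z}$ for $t=n$, the ``vertex all of whose incident edges have distinct colours, hence a transversal isolating it'' argument plus the wheel colouring for $t=n-1$, and a rainbow-cycle-free example for $t\le n-2$. The only deviations are cosmetic: you build the $t\le n-2$ examples from a generalized theta with monochromatic length-two paths where the paper simply deletes colour classes from the wheel colouring, and you spell out the rainbow-cycle/directed-cycle correspondence and the small-$n$ vacuity that the paper leaves as ``easy to check.''
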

\begin{proof}
By Theorem~\ref{thm:main}, $f(n,n) \leq \lceil \frac{n}{2} \rceil$.  For the corresponding lowerbound, let $G$ be a graph with vertex set $\mathbb{Z} / n \mathbb{Z}$ and edges  $i(i+1)$ and $i(i+2)$ for all $i \in V(G)$.  Colour both $i(i+1)$ and $i(i+2)$ with colour $i$ for all $i \in V(G)$. See Figure~\ref{fig:tillgraph}.  It is easy to check that the shortest rainbow cycle in this graph has length $\lceil \frac{n}{2} \rceil$.

\begin{figure}[ht]
\centering
\includegraphics{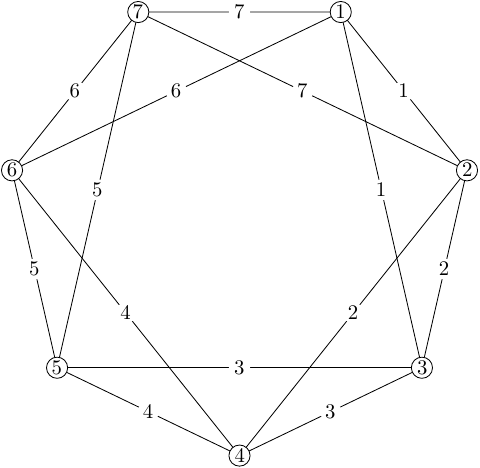}
\caption{The shortest rainbow cycle has length $\lceil \frac{7}{2} \rceil=4$.}
\label{fig:tillgraph}
\end{figure}

We now show $f(n, n-1)=n-1$. For the upperbound, let $G$ be a graph with $|V(G)|=n,|E(G)|=2n-2$, and let $c$ be a colouring of $E(G)$ such that each colour class has size $2$. Since there are only $n-1$
colours, there is a vertex $v$ of $G$ such that all edges incident to $v$ are coloured differently.  Therefore, there is a transversal $H$ of $(G,c)$ such that $v$ is an isolated vertex in $H$.  Since $H-v$ contains $n-1$ vertices and $n-1$ edges, $H-v$ contains a cycle of length at most $n-1$.  For the corresponding lowerbound, let $W_n$ be the wheel graph on $n$ vertices.  Let $c$ be a colouring of $E(W_n)$ such that each colour class is a path with two edges, one of which is incident to the hub vertex. See Figure~\ref{fig:badcolouring}.  Observe that no rainbow cycle of $(W_n, c)$ can use the hub vertex.  Therefore, the shortest rainbow cycle in $(W_n, c)$ has length $n-1$.

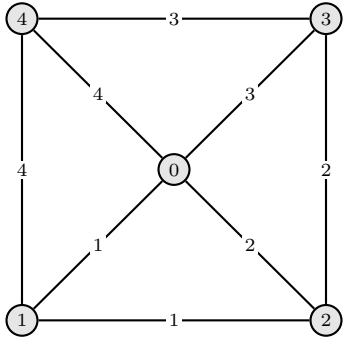
\begin{figure}
\centering
\begin{tikzpicture}[scale=2.0,inner sep=2.0pt]
\tikzstyle{vtx}=[circle,draw,thick,fill=black!10]
\node[vtx] (1) at (0,0) {\tiny $1$};
\node[vtx] (2) at (2,0) {\tiny $2$};
\node[vtx] (3) at (2,2) {\tiny $3$};
\node[vtx] (4) at (0,2) {\tiny $4$};
\node[vtx] (0) at (1,1) {\tiny $0$};

\draw[thick] (1) -- node[fill=white,inner sep=1pt,midway]{\tiny $1$} (2);
\draw[thick] (2) -- node[fill=white,inner sep=1pt,midway]{\tiny $2$} (3);
\draw[thick] (3) -- node[fill=white,inner sep=1pt,midway]{\tiny $3$} (4);
\draw[thick] (4) -- node[fill=white,inner sep=1pt,midway]{\tiny $4$} (1);
\draw[thick] (1) -- node[fill=white,inner sep=1pt,midway]{\tiny $1$} (0);
\draw[thick] (2) -- node[fill=white,inner sep=1pt,midway]{\tiny $2$} (0);
\draw[thick] (3) -- node[fill=white,inner sep=1pt,midway]{\tiny $3$} (0);
\draw[thick] (4) -- node[fill=white,inner sep=1pt,midway]{\tiny $4$} (0);
\end{tikzpicture}
\caption{A colouring $c$ of $E(W_5)$, whose shortest rainbow cycle has length $4$.}
\label{fig:badcolouring}
\end{figure}

By deleting two edges of the same colour from $(W_n, c)$ we obtain a graph on $n$ vertices and $2(n-2)$ edges that does not contain a rainbow cycle.  Therefore, $f(n, t)=\infty$, for all $t \leq n-2$. 
\end{proof}

We have determined $f(n,t)$ exactly for all $t \leq n$.  What happens for $t > n$?
The best general upper bound we can prove follows from a theorem of Bollobás and Szemerédi~\cite{BS02}. To state their result, we need some definitions. The \emph{girth} of a graph $G$, denoted $g(G)$, is the length of a shortest cycle in $G$.  Define
\[
g(n,k):=\max \{g(G): |V(G)|=n, |E(G)|-|V(G)|=k\}.
\]

Bollobás and Szemerédi prove the following.

\begin{thm}[\cite{BS02}] \label{thm:BS}
For all $n \geq 4$ and $k \geq 2$,
\[
g(n,k) \leq \frac{2(n+k)}{3k} (\log k + \log \log k +4).
\]
\end{thm}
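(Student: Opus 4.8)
The plan is to prove the bound by reducing to a small, dense auxiliary multigraph and then running a Moore-type counting argument while tracking edge lengths throughout. We may assume $G$ is connected (otherwise we work inside a single component) and then dispose of trivial structure: delete vertices of degree at most $1$ one at a time. Each such deletion removes one vertex and one edge, so it preserves the excess $|E|-|V|=k$, and since no cycle passes through a leaf it leaves the girth unchanged. After this $G$ has minimum degree at least $2$, so its degree-$2$ vertices lie on internally disjoint paths; suppressing each such path to a single weighted edge produces a multigraph $H$ of minimum degree at least $3$, where the length $\ell(e)$ of an edge records the number of $G$-edges it replaces. Suppression also preserves $|E|-|V|$, so $|E(H)|-|V(H)|=k$, and minimum degree at least $3$ forces $2|E(H)|\ge 3|V(H)|$; hence $|V(H)|\le 2k$ and $|E(H)|\le 3k$. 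Moreover $\sum_{e}\ell(e)=|E(G)|=n+k$, and a shortest cycle of $G$ corresponds exactly to a cycle $C$ of $H$ minimising $\sum_{e\in C}\ell(e)$. Writing $g$ for this weighted girth of $H$, the whole problem becomes bounding $g$ in terms of $n$ and $k$.

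The second step is a \emph{weighted} Moore bound on $H$. Growing a ball from a root vertex in the hop (graph) distance, minimum degree at least $3$ means that each newly discovered vertex contributes at least two forward edges until the first cycle closes, so the number of vertices within $j$ hops grows like $2^{j}$. Since $H$ has at most $2k$ vertices, a cycle must close after at most roughly $2\log_2(2k)$ hops, producing a cycle that uses only $O(\log k)$ edges of $H$. This already bounds the \emph{unweighted} girth of $H$ by $O(\log k)$. The difficulty is that such a few-hop cycle may traverse a small number of very long edges, so its total length $\sum_{e}\ell(e)$ need not be small.

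Converting the few-hop cycle into a genuinely short-\emph{length} cycle is the main obstacle, and it is where the constant $\tfrac{2(n+k)}{3k}$ must come from. A crude first attempt is to threshold: discard the edges of length exceeding some $\lambda$, of which there are at most $(n+k)/\lambda$, choose $\lambda$ so that the surviving multigraph still has positive excess (hence still contains a minimum-degree-$3$ core on at most $2k$ vertices), and observe that every edge of the resulting $O(\log k)$-hop cycle now has length at most $\lambda$. Optimising $\lambda\approx (n+k)/k$ already yields the correct order $\tfrac{n+k}{k}\log k$. To reach the sharp leading constant one cannot afford the worst-case estimate that every hop has length $\lambda$; instead I would run the ball-growing argument directly in the weighted metric and charge the total weight $n+k$ across the at most $3k$ edges, so that the cycle produced has length comparable to (number of hops)$\,\times\,$(average edge length) $= O(\log k)\times\tfrac{n+k}{3k}$. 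I expect this averaging to be the delicate point: one must argue that a shortest cycle meets edges of close-to-average length, and the secondary $\log\log k$ term should emerge from optimising the radius, equivalently the length threshold, at which the counting forces two shortest paths to collide. Combining the average-length factor $\tfrac{n+k}{3k}$, the factor $2$ relating weighted radius to weighted girth, and the $\log k+\log\log k+O(1)$ factor from the vertex count of $H$ then yields the stated inequality.
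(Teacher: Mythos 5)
The paper does not prove this statement at all: it is quoted verbatim from Bollob\'as and Szemer\'edi~\cite{BS02} and used as a black box in the proof of Theorem~\ref{thm:rainbowgirth}, so there is no internal proof to compare against. Judged on its own, your proposal correctly reproduces the standard opening reduction (and the one actually used in~\cite{BS02}): delete degree-$\le 1$ vertices and suppress degree-$2$ vertices to obtain a weighted multigraph $H$ of minimum degree $3$ with $|V(H)|\le 2k$, $|E(H)|\le 3k$, total edge weight $n+k$, and the same excess and (weighted) girth. That part is sound, as is the unweighted Moore-type bound showing $H$ has a cycle using $O(\log k)$ edges.

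However, the proof has a genuine gap exactly where the theorem is hard. The entire content of the Bollob\'as--Szemer\'edi bound beyond the trivial $O\bigl(\frac{n+k}{k}\log k\bigr)$ estimate is the precise constant $\frac{2(n+k)}{3k}$ together with the secondary $\log\log k$ term, and your text does not prove either: it says ``I expect this averaging to be the delicate point'' and describes what ``should emerge,'' which is a plan, not an argument. The crude thresholding you sketch (discard edges of length exceeding $\lambda$, keep positive excess) does give the right order of magnitude, but after deleting up to $(n+k)/\lambda$ edges the surviving excess drops, the new core can be larger relative to the new excess, and a naive bound of $\lambda$ per hop over $2\log_2 k$ hops loses a constant factor that the theorem does not permit. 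The claim that ``a shortest cycle meets edges of close-to-average length'' is precisely what needs to be established and is not obviously true edge by edge; the actual argument in~\cite{BS02} grows balls in the weighted metric with a carefully optimised radius and charges weight globally rather than per edge. Until that averaging step is carried out rigorously, you have a proof of a weaker statement of the form $g(n,k)\le C\cdot\frac{n+k}{k}\log k$ for some unspecified constant $C$, not of the inequality as stated.
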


As a corollary, we obtain the following.

\begin{thm} \label{thm:rainbowgirth}
For all $n \geq 4$ and $k \geq 2$,
\[
f(n,n+k) \leq \frac{2(n+k)}{3k} ( \log k + \log \log k +4).
\]
\end{thm}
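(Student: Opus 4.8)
The plan is to reduce the rainbow girth of an arbitrary coloured graph to the ordinary girth of a suitably chosen rainbow subgraph, and then apply Theorem~\ref{thm:BS} directly. Let $(G,c)$ be any pair with $|V(G)|=n$, $|E(G)|=2(n+k)$, and $c$ a colouring whose colour classes all have size two; since $f(n,n+k)$ is the maximum of $\rg(G,c)$ over all such pairs, it suffices to bound $\rg(G,c)$ by the claimed quantity. Note that the hypothesis forces $c$ to use exactly $n+k$ colours. The first step is to select one edge from each colour class and let $H$ be the resulting spanning subgraph of $G$ (on the full vertex set $V(G)$, so that some vertices may become isolated). Because $H$ contains exactly one edge of each colour, all of its edges have distinct colours; consequently every cycle of $H$ is automatically a rainbow cycle of $(G,c)$.

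Next I would record the parameters of $H$. We have $|V(H)|=n$ and $|E(H)|=n+k$, so $|E(H)|-|V(H)|=k$. Since $k\ge 2$ forces $|E(H)|>|V(H)|$, the subgraph $H$ contains a cycle, and in the notation preceding Theorem~\ref{thm:BS} it is one of the graphs over which $g(n,k)$ is maximized; hence $g(H)\le g(n,k)$. Invoking the Bollob\'as--Szemer\'edi bound (applicable precisely because $n\ge 4$ and $k\ge 2$) gives a cycle of $H$ of length at most $\frac{2(n+k)}{3k}(\log k+\log\log k+4)$. As this cycle lies in the rainbow subgraph $H$, it is rainbow, so $\rg(G,c)$ is at most the same quantity, which completes the bound on $f(n,n+k)$.

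There is essentially no hard step: the entire content is the observation that a transversal-style selection of one edge per colour converts rainbow cycles into ordinary cycles of a subgraph whose edge excess $|E(H)|-|V(H)|$ equals exactly the Bollob\'as--Szemer\'edi parameter $k$. The only points requiring a little care are bookkeeping ones --- retaining the isolated vertices so that $|V(H)|=n$ matches the first argument of $g(n,\cdot)$, and checking that the excess is exactly $k$ so that Theorem~\ref{thm:BS} applies verbatim with no loss. I expect the main (indeed only) conceptual step to be recognizing this reduction; everything afterwards is a direct substitution into the stated bound.
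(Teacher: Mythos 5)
Your proposal is correct and is essentially identical to the paper's proof: both take a transversal $H$ of $(G,c)$, observe that it has $n$ vertices and $n+k$ edges, apply Theorem~\ref{thm:BS} to find a short cycle in $H$, and note that any such cycle is rainbow. No further comment is needed.
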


\begin{proof}
Let $G$ be a simple $n$-vertex graph, with $|E(G)|=2(n+k)$ and $c$ be a colouring of $E(G)$ where each colour class has size $2$.  Let $H$ be a transversal of $(G,c)$.  Note that $H$ has $n$ vertices and $n+k$ edges.  By Theorem~\ref{thm:BS}, $H$ contains a cycle of length at most $\frac{2(n+k)}{3k} (\log k + \log \log k +4)$.  Since this cycle is necessarily rainbow, we are done.
\end{proof}

\section{Matroid Generalizations} \label{sec:matroids}

In this section, we consider matroid generalizations of Aharoni's conjecture.  For the reader unfamiliar with matroids, we introduce all the necessary definitions now.  Note that nothing beyond basic linear algebra will be required.  For a more thorough introduction to matroids, we refer the reader to Oxley~\cite{oxley11}. 

A \emph{matroid} is a pair $M=(E, \mathcal C)$ where $E$ is a finite set, called the \emph{ground set} of $M$, and $\mathcal C$ is a collection of subsets of $E$, called \emph{circuits}, satisfying
\begin{itemize}
    \item $\emptyset \notin \mathcal C$, 
    \item if $C'$ is a proper subset of $C \in \mathcal C$, then $C' \notin \mathcal C$,
    \item if $C_1$ and $C_2$ are distinct members of $\mathcal C$ and $e \in C_1 \cap C_2$, then there exists $C_3 \subseteq (C_1 \cup C_2) \setminus \{e\}$.
\end{itemize}
A set $I \subseteq E$ is \emph{independent} if it does not contain a circuit.  The \emph{rank} of $X \subseteq E$ is the size of a largest independent set contained in $X$, and is denoted $r_M(X)$.  The \emph{rank} of $M$ is $r(M):=r_M(E)$.  A matroid is \emph{simple} it it does not contain any circuits of size $1$ or $2$.  We now give examples of all the matroids that appear in this paper.

Let $G$ be a graph.  We will consider two different matroids with ground set $E(G)$.  The circuits of the first matroid are the (edges of) cycles of $G$. This is the \emph{cycle matroid} of $G$, denoted $M(G)$.   A matroid is \emph{graphic} if it is isomorphic to the cycle matroid of some graph.   The second matroid is the dual of the cycle matroid of $G$.  However, we will not define duality, opting instead to define this matroid directly.  An \emph{edge-cut} of $G$ is a set of edges $C^*$ such that $G \setminus C^*$ has more connected components than $G$.  A \emph{cocycle} is an inclusion-wise minimal edge-cut.  The collection of cocycles of $G$ is also a matroid, called the \emph{cocycle matroid} of $G$, and is denoted $M(G)^*$.  A matroid is \emph{cographic} if it is isomorphic to the cocycle matroid of some graph. 

Let $\mathbb F$ be a field.  An \emph{$\mathbb F$-matrix} is a matrix with entries in $\mathbb F$.  Let $A$ be an $\mathbb F$-matrix whose columns are labelled by a finite set $E$. The \emph{column matroid} of $A$, denoted $M[A]$, is the matroid with ground set $E$ whose circuits correspond to the minimal (under inclusion) linearly dependent columns of $A$.  A matroid is \emph{representable over $\mathbb{F}$} if it is isomorphic to $M[A]$ for some $\mathbb{F}$-matrix $A$.  A matroid is \emph{binary} if it representable over the two-element field,  and it is \emph{regular} if it is representable over every field.  

Finally, for integers $0 \leq k \leq n$, the \emph{uniform matroid} $U_{k,n}$ is the matroid with ground set $[n]$, whose circuits are all the subsets of $[n]$ of size $k+1$.  

An attractive feature of Aharoni's conjecture as opposed to the Caccetta-Häggkvist conjecture, is that there is a natural matroid generalization.  For example, the following is the matroid analogue of Theorem~\ref{thm:main}.

\begin{conj} \label{conj:matroid}
Let $M$ be a simple rank-$(n-1)$ matroid and $c$ be a colouring of $E(M)$ with $n$ colours, where each colour class has size at least $2$. Then $M$ contains a rainbow circuit of size at most $\lceil \frac{n}{2} \rceil$.
\end{conj}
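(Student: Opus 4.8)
The plan is to mirror the proof of Theorem~\ref{thm:main} as closely as the matroid setting allows, taking the matroid analogue of a transversal as the central object. First I would pass to a minimal counterexample $M$, so that by minimality every colour class has size exactly $2$. The key preliminary observation carries over verbatim at the level of ranks: a transversal $T$ (a set containing exactly one element of each colour) has $|T|=n$, while $r_M(T)\leq r(M)=n-1$, so $T$ is dependent and therefore contains a circuit. This circuit is rainbow, so rainbow circuits exist and the only issue is their size. This replaces the graph statement ``a transversal contains a cycle.''

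Next I would set up the nullity dichotomy that drove the graph argument. The nullity of a transversal is $|T|-r_M(T)\geq 1$, and the aim is to show that if this nullity is $\geq 2$ for some transversal, then $M$ already contains a short rainbow circuit. In the graph case, nullity $2$ forced a rainbow theta, and Claim~\ref{clm:thetabound} then produced a circuit of length at most $\tfrac{2}{3}|V(\theta)|$. I would try to define a matroid ``theta'' as a connected rainbow dependent set $S$ with $r_M(S)=|S|-2$, and to locate such a set using the slack between the $n$ colours and the rank $n-1$ (the analogue of finding a vertex whose incident edges are rainbow, which let the graph proof isolate a vertex and push the theta into the remaining component).

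The final step would recover the counting argument at the end of Section~\ref{sec:proof} in rank-function language. Since $M$ with the elements of $S$ deleted still contains a transversal, one obtains a second rainbow circuit $C$ that is ground-set-disjoint from $S$; the role of a chord of $\theta$ is played by an element of $E(M)\setminus S$ lying in the closure of $S$, and the cut-type inequality bounding the overlap would be replaced by a submodularity estimate on $r_M$ relating the ranks of $S$, $C$, and their union. Combining such a rank bound with the matroid version of Claim~\ref{clm:thetabound} would, as in the graph case, squeeze $r_M(S)$ down far enough to force a rainbow circuit of size at most $\lceil \tfrac{n}{2}\rceil$.

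The main obstacle, and where essentially all the difficulty concentrates, is the matroid analogue of Claim~\ref{clm:thetabound}: the assertion that a connected rainbow dependent set of nullity $2$ and size $s$ must contain a circuit of size about $\tfrac{2}{3}s$. For graphs this is the elementary fact that a theta is covered evenly by its three cycles, but no such bound is available for matroids in general, since a nullity-$2$ set can behave like the uniform matroid $U_{s-2,s}$, every circuit of which has size $s-1$. The success of the whole strategy therefore rests on using the simplicity of $M$ and the size-$2$ colour classes to exclude such uniform-like substructures; likewise the vertex-orientation step, which invoked Theorem~\ref{directed}, has no evident matroid counterpart and would need to be replaced by a purely rank-theoretic device. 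I expect controlling these low-nullity pieces to be the crux, and any proof to stand or fall on it.
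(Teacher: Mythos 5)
There is a fundamental problem here that no amount of technical work can repair: the statement you are trying to prove is false, and the paper says so explicitly immediately after stating Conjecture~\ref{conj:matroid}. The uniform matroid $U_{n-1,2n}$ is a counterexample: it is simple (for $n \geq 3$, since every circuit has size $n$), has rank $n-1$, and admits a colouring with $n$ colour classes each of size exactly $2$ (any perfect pairing of the ground set), yet it contains \emph{no} circuit of size less than $n$, let alone a rainbow one of size at most $\lceil \frac{n}{2} \rceil$. The obstacle you identify in your final paragraph --- that a nullity-$2$ set can look like $U_{s-2,s}$, whose circuits all have size $s-1$ --- is therefore not a difficulty to be overcome but an outright refutation: your hope that ``the simplicity of $M$ and the size-$2$ colour classes'' exclude uniform-like structures cannot be realized, because the counterexample above satisfies both hypotheses. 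The matroid analogue of Claim~\ref{clm:thetabound} is simply false, and with it the entire strategy of mirroring the proof of Theorem~\ref{thm:main} collapses at the step you correctly flagged as the crux.

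What the paper actually does is quite different in scope: it proves the conjecture for \emph{graphic} matroids (this is just Theorem~\ref{thm:main} restated) and for \emph{cographic} matroids (Theorem~\ref{thm:cographic}, by a short degree-counting argument on the underlying graph, not by any theta-type structure), and then shows that even restricting to \emph{binary} matroids does not save the statement, via an explicit $\mathbb{F}_2$-represented family on $2n$ elements for all even $n \geq 6$. So representability over $\mathbb{F}_2$ is also insufficient; the paper leaves the statement open only for regular matroids (Conjecture~\ref{conj:regular}). Your transversal observation (a rainbow set of size $n$ exceeds the rank $n-1$, hence contains a rainbow circuit) is correct and does show every such $(M,c)$ has \emph{some} rainbow circuit, but controlling its size requires structural hypotheses well beyond simplicity --- any salvage of your approach would have to begin by restricting the matroid class, not by refining the counting.
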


 Let $G$ be a simple, connected, $n$-vertex graph, and $r$ be the rank of $M(G)$.  Note that $r$ is the number of edges in a spanning tree of $G$, and so $n-1=r$.  Moreover, since the circuits of $M(G)$ are the cycles of $G$, Conjecture~\ref{conj:matroid} holds for graphic matroids by Theorem~\ref{thm:main}.  

 Unfortunately, it is easy to see that Conjecture~\ref{conj:matroid} is false, since the uniform matroid $U_{n-1, m}$ does not contain \emph{any} circuits of size less than $n$.  On the other hand, we now prove that Conjecture~\ref{conj:matroid} is true for cographic matroids. 

\begin{thm} \label{thm:cographic}
Let $N$ be a simple rank-$(n-1)$ cographic matroid and $c$ be a colouring of $E(N)$ with $n$ colours, where each colour class has size at least $2$. Then $N$ contains a rainbow circuit of size at most $\lceil \frac{n}{2} \rceil$.
\end{thm}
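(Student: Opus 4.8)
The plan is to work with a graph $G$ satisfying $N = M(G)^{*}$, translating everything about $N$ into the language of edge cuts of $G$. Replacing $G$ by a connected graph obtained by identifying one vertex from each component (which does not change the cocycle matroid), we may assume $G$ is connected, so that $r(N) = |E(G)| - |V(G)| + 1 = n-1$. A standard minimality reduction lets us assume each colour class has exactly two edges, whence $|E(G)| = 2n$ and $|V(G)| = n+2$. The circuits of $N$ are exactly the bonds (inclusion-minimal edge cuts) of $G$, and $N$ is simple precisely when $G$ has no bond of size $1$ or $2$; equivalently $G$ is $3$-edge-connected. The key simplification is that it suffices to find any edge cut $\delta(S)$ (the set of edges with exactly one end in $S$, for some $\emptyset \neq S \subsetneq V(G)$) that is rainbow and has size at most $\lceil \frac{n}{2}\rceil$: every edge cut contains a bond, a subset of a rainbow set is rainbow, and the bond so obtained is a rainbow circuit of $N$ of size at most $\lceil \frac{n}{2}\rceil$.

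Because each colour class has exactly two edges, the complement of any transversal of $(N,c)$ is again a transversal, and I would exploit this as follows. Fix a transversal $R$, and let $A_0, \dots, A_{p-1}$ be the connected components of the spanning subgraph $(V(G), E(G)\setminus R)$. Form the quotient multigraph $H$ by contracting each $A_i$ to a single vertex and discarding the resulting loops. Then the bonds of $G$ contained in $R$ are exactly the bonds of $H$, and each such bond is automatically rainbow because $R$ meets every colour class at most once. Now $H$ is connected with $p$ vertices and at most $n$ edges, so $H$ has a vertex $x$ of degree at most $\lfloor \frac{2n}{p}\rfloor$; the edges incident to $x$ form an edge cut of $H$, inside which we find a bond of $H$ of size at most $\lfloor \frac{2n}{p}\rfloor$. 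Therefore, as soon as $p \geq 4$ we obtain a rainbow bond of $G$ of size at most $\lfloor \frac{n}{2}\rfloor \leq \lceil \frac{n}{2}\rceil$, and we are done.

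It remains to guarantee such a transversal. The number of components is $p = |V(G)| - r_{M(G)}(E(G)\setminus R) = 2 + \big(|E(G)\setminus R| - r_{M(G)}(E(G)\setminus R)\big)$; that is, $p$ equals $2$ plus the number of independent cycles in the complementary transversal $E(G)\setminus R$. Since that complement is itself a transversal, the argument above succeeds whenever some transversal of $(N,c)$ contains at least two independent cycles. A complementary tool covers many configurations directly: because $\sum_{v} \deg(v) = 4n$, only a bounded number (at most seven) of vertices have degree larger than $\lceil \frac{n}{2}\rceil$, so if some vertex $w$ of degree at most $\lceil \frac{n}{2}\rceil$ meets every colour class at most once, then $\delta(w)$ is a rainbow edge cut of the required size and we finish as before.

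The hard case, which I expect to be the main obstacle, is when no transversal contains two independent cycles (equivalently $(V(G), E(G)\setminus R)$ has at most three components for every transversal $R$) and, simultaneously, every vertex of degree at most $\lceil \frac{n}{2}\rceil$ carries a repeated colour, namely a monochromatic pair of edges sharing an end. These two conditions are extremely rigid, and I would attack them with an exchange argument: swapping the two edges of a single colour in a transversal changes the number of components of the complementary subgraph by at most one, so starting from any transversal one can try to migrate toward one with four components, the failure of which should pin down the colouring up to a highly structured, near-extremal configuration in which almost every vertex is the centre of a monochromatic cherry. In that final configuration I would orient the two edges of each such cherry away from their common vertex and apply Theorem~\ref{directed}, exactly as in the degenerate case of the proof of Theorem~\ref{thm:main}, to extract the desired short rainbow structure. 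Reconciling the global scarcity of independent cycles in transversals with this abundance of monochromatic cherries is the crux of the argument.
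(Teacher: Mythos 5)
There is a genuine gap. Your main line of attack (pass to a transversal $R$, contract the components of $(V(G),E(G)\setminus R)$ to get a quotient graph $H$ on $p$ vertices with at most $n$ edges, and extract a rainbow bond of size at most $\lfloor 2n/p\rfloor$) is sound as far as it goes, but it only succeeds when $p\geq 4$, i.e.\ when the complementary transversal contains two independent cycles. A transversal has $n$ edges on $n+2$ vertices, so it can perfectly well be a forest or unicyclic, in which case $p\in\{2,3\}$ and the bound degenerates to $\lfloor 2n/3\rfloor$ or worse. You explicitly label this situation ``the hard case'' and offer only a speculative exchange argument plus an appeal to Theorem~\ref{directed}, with no indication of how the ``global scarcity of independent cycles'' actually contradicts the ``abundance of monochromatic cherries.'' As written, the proof does not close.

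What is frustrating is that you had already assembled every ingredient the paper actually needs, and its endgame is a two-line count that replaces your entire transversal machinery. Since each colour class has exactly two edges, each colour can be the repeated colour at the star $\delta_G(v)$ of at most one vertex; with $n$ colours and $n+2$ vertices, at least \emph{two} vertices $x,y$ have rainbow stars. If either has degree at most $\lceil n/2\rceil$, its star contains a rainbow bond of the required size and you are done. Otherwise $\deg_G(x),\deg_G(y)\geq\lceil n/2\rceil+1$, and since $\sum_{v}\deg_G(v)=2|E(G)|=4n$, the remaining $n$ vertices have total degree at most $3n-2$, forcing some vertex of degree at most $2$ --- contradicting the minimum degree $3$ that you yourself derived from $3$-edge-connectivity. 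Your observation that ``at most seven vertices have degree larger than $\lceil n/2\rceil$'' is counting in the wrong direction: the useful bound is on the number of vertices with a \emph{non-rainbow} star (at most $n$), not on the number of high-degree vertices. Incidentally, your reduction of the disconnected case by identifying one vertex from each component is correct and is actually cleaner than the paper's merging-of-colour-classes argument.
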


\begin{proof}
Let $(N,c)$ be a counterexample with $|E(N)|$ minimum.  By minimality, each colour class has size exactly $2$.  
  Let $G$ be a graph such that $N=M(G)^*$. Let $G_1, \dots, G_k$ be the connected components of $G$, and $r_i:=r(M(G_i)^*)$ for each $i \in [k]$. Since every cocycle of $N$ is a cocycle in some $N_i:=M(G_i)^*$, it follows that
  \begin{equation} \label{additiverank}
  \sum_{i \in [k]} r_i=r(N)=n-1.  
  \end{equation}
  
  First suppose there is some $j \in [k]$ such that $|E(G_j)| \geq 2(r_j+1)$.  By merging colour classes we may assume that exactly $r_j+1$ colours appear in $E(G_j)$ and each of these colours appears at least twice in $E(G_j)$.  By minimality, $G_j$ contains a rainbow cocycle $C^*$ of size at most $\lceil \frac{r_j+1}{2} \rceil \leq \lceil \frac{n}{2} \rceil$.  By unmerging colours, $C^*$ is also a rainbow cocycle of $G$, so we are done.  By (\ref{additiverank}), such an index $j$ exists unless $k=2, |E(G_1)|=2r_1+1$, and $|E(G_2)|=2r_1+1$.  By (\ref{additiverank}) and symmetry, we may assume $r_1 \leq \lfloor \frac{n-1}{2} \rfloor$.  Since |$E(G_1)|=2r_1+1$ and each colour appears at most twice in $E(G_1)$, there exists a rainbow set $A \subseteq E(G_1)$ such that $|A| = r_1+1$.  Since $|A| > r_1$, $A$ contains a cocycle $C^*$ of $G$.  Since $C^*$ is rainbow and $|C^*| \leq |A|= r_1+1 \leq \lfloor \frac{n-1}{2} \rfloor+1 = \lceil \frac{n}{2} \rceil$, we are done.  
  
Henceforth, we may assume that $G$ is connected.  Let $N^*=M(G)$, the cycle matroid of $G$.  We use the well-known fact that $r(N)+r(N^*)=|E(G)|=2n$.  Therefore, since $N$ has rank $n-1$, $N^*$ has rank $n+1$.  Since $G$ is connected, $|V(G)|=n+2$. 

 For each vertex $v \in V(G)$, let $\delta_G(v)$ be the set of edges of $G$ incident to $v$. Since $\delta_G(v)$ is an edge-cut for each $v \in V(G)$ and $N$ is simple, $G$ has minimum degree at least $3$.   Moreover, since there are exactly $n$ colours and $n+2$ vertices, there are at least two distinct vertices $x$ and $y$ of $G$ such that $\delta_G(x)$ and $\delta_G(y)$ are both rainbow.  If $\deg_G(x) \leq \lceil \frac{n}{2} \rceil$ or $\deg_G(y) \leq \lceil \frac{n}{2} \rceil$, then $\delta_G(x)$ or $\delta_G(y)$ contains a rainbow cocycle of size at most $\lceil \frac{n}{2} \rceil$.  Thus, $\deg_G(x), \deg_G(y) \geq \lceil \frac{n}{2} \rceil+1$. Since $4n=2|E(G)|=\sum_{v \in V(G)} \deg_G(v)$, it follows that 
 $\sum_{v \in V(G) \setminus \{x,y\}} \deg_G(v) \leq 3n-2$.  Therefore, some vertex $z \in V(G) \setminus \{x,y\}$ has degree at most $2$, which contradicts that $G$ has minimum degree at least $3$.
 \end{proof}

We finish this section by giving an infinite family of binary matroids for which Conjecture~\ref{conj:matroid} fails.

\begin{thm} \label{thm:binary}
For each even integer $n \geq 6$, there exists a simple rank-$(n-1)$ binary matroid $M$ on $2n$ elements, and a colouring of $E(M)$ where each colour class has size $2$, such that all rainbow circuits of $(M,c)$ have size strictly greater than $\frac{n}{2}$.
\end{thm}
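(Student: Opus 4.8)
The plan is to build a ``doubled'' binary matroid in which every rainbow circuit is forced \emph{not} to mix an element with its colour-partner, which reduces the whole problem to exhibiting a short binary code in which every nonzero word is heavy. The mechanism is a shift: I take $n$ ground-set vectors $a_1,\dots,a_n$ spanning a hyperplane $W$, and add their translates $b_i:=a_i+d$ by a vector $d\notin W$, colouring $a_i,b_i$ with colour $i$. Because $d$ lies outside $W$, any rainbow circuit that used an odd number of $b_i$'s would have nonzero sum, so the only rainbow circuits come from genuine linear dependencies among the $a_i$ alone; their sizes are then governed by the minimum distance of the dependency code of $\{a_i\}$.

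First I would fix a partition $[n]=A\sqcup B\sqcup C$ into three nonempty parts each of size strictly less than $n/2$; three parts summing to $n$ with maximum part $\le n/2-1$ exist precisely when $3(n/2-1)\ge n$, i.e.\ $n\ge 6$, which is why the hypothesis takes this form (for even $n=6$ the parts are $(2,2,2)$). Let $\mathcal D\subseteq\mathbb F_2^{\,n}$ be the $2$-dimensional code generated by $\mathbf 1_{A\cup B}$ and $\mathbf 1_{B\cup C}$. Its three nonzero codewords are $\mathbf 1_{A\cup B},\mathbf 1_{B\cup C},\mathbf 1_{A\cup C}$, of weights $n-|C|,\,n-|A|,\,n-|B|$, each exceeding $n/2$; moreover the three supports are pairwise incomparable, so the minimal nonzero supports of $\mathcal D$ are exactly these three sets and $\mathcal D$ has minimum distance $>n/2$.

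Next I would realise $\mathcal D$ as a dependency space. Set $W:=\mathbb F_2^{\,n}/\mathcal D\cong\mathbb F_2^{\,n-2}$ and let $a_i$ be the image of the $i$-th standard basis vector under the quotient map, so that $\sum_{i\in S}a_i=0$ iff $\mathbf 1_S\in\mathcal D$. Since $\mathcal D$ has minimum weight $>n/2\ge 3$, no $a_i$ is zero and the $a_i$ are pairwise distinct, and they span $W$. I then embed $W$ as a hyperplane of $\mathbb F_2^{\,n-1}$, choose $d\in\mathbb F_2^{\,n-1}\setminus W$, and let $M$ be the column matroid of the matrix whose columns are $a_1,\dots,a_n,b_1,\dots,b_n$ with $b_i:=a_i+d$, colouring $\{a_i,b_i\}$ with colour $i$. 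All $2n$ columns are nonzero (as $d\notin W\ni a_i$) and distinct (the $a_i$ lie in $W$, the $b_i$ in the other coset, and each family is distinct), so $M$ is simple; and since the $a_i$ span $W$ and $d=b_1+a_1$ lies in the span, $M$ has rank $\dim(W\oplus\langle d\rangle)=n-1$.

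The crux is the rainbow-circuit analysis. A rainbow set is indexed by some $S\subseteq[n]$ with, for each $i\in S$, a choice of $a_i$ or $b_i$; writing $T\subseteq S$ for the indices where $b_i$ is chosen, the sum of the chosen vectors is $\sum_{i\in S}a_i+(|T|\bmod 2)\,d$. As $\sum_{i\in S}a_i\in W$ while $d\notin W$, this vanishes only when $|T|$ is even and $\sum_{i\in S}a_i=0$, i.e.\ only when $\mathbf 1_S\in\mathcal D$. Hence every rainbow circuit has colour-support a minimal nonzero support of $\mathcal D$, so its size is at least the minimum distance of $\mathcal D$, which exceeds $n/2$; conversely the three generating supports (taking all $a_i$) realise rainbow circuits, so the bound is attained. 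I expect the only genuine difficulty to be the design of $\mathcal D$: one needs the dependency space to be two-dimensional (so that $d$ can be parked outside $\operatorname{span}(a_i)$, which is exactly what excludes the ``mixed'' short circuits that make the graphic and cographic cases succeed) while simultaneously forcing all three of its nonzero words above weight $n/2$ — and the balanced three-part partition is precisely the gadget that achieves both at once.
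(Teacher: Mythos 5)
Your construction is correct, and it is genuinely different from the one in the paper. The paper exhibits an explicit set of $2n$ vectors in $\mathbb{F}_2^{n-1}$ (standard basis vectors, consecutive sums $\mathbf{e}_i+\mathbf{e}_{i+1}$, and three special vectors involving $\mathbf 1$) with a hand-tailored pairing, and then rules out short rainbow dependencies by a coordinate-by-coordinate elimination argument. You instead use a uniform ``constant shift'' gadget: every colour class is $\{a_i, a_i+d\}$ with $d$ outside the span $W$ of the $a_i$, which immediately forces any rainbow zero-sum set to use an even number of shifted elements and hence to project onto a genuine dependency of the $a_i$ alone. This reduces the whole problem to producing a $2$-dimensional subspace $\mathcal D\le\mathbb F_2^n$ (the dependency code of the $a_i$) all of whose nonzero words have weight exceeding $n/2$, and your balanced three-part partition $[n]=A\sqcup B\sqcup C$ with all parts smaller than $n/2$ does exactly that; the dimension count $\dim\mathcal D=2$ is what makes the rank come out to $n-1$, and the minimum-distance bound $>n/2\ge 3$ also gives simplicity of the $a_i$ for free. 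All the verifications (simplicity, rank, the even-$|T|$ argument, and the weight bound $n-|A|,n-|B|,n-|C|>n/2$) are sound. What your approach buys is conceptual clarity and flexibility: it isolates the real content as a coding-theoretic design problem, and it visibly extends to odd $n\ge 7$ (the paper only remarks that ``a slight modification'' handles that case). What the paper's version buys is a completely explicit matrix one can display and check directly. Note that the existence of rainbow circuits attaining the bound is not needed for the statement, but your observation that $\{a_i : i\in A\cup B\}$ is such a circuit is a correct sanity check.
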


\begin{proof} %Somehow I thought this proof might be easier on the eyes with some reindexing,  n instead of n-1 everywhere.. but i guess that might confuse the statement of the theorem? %
Let $n \geq 6$ be even.  For each $i \in [n-1]$, let $\mathbf{e}_i$ be the $i$th standard basis vector in $\mathbb{F}_2^{n-1}$. Let $\mathbf 0$ and $\mathbf 1$ be the all-zeros and all-ones vectors in $\mathbb{F}_2^{n-1}$, respectively. Let $M$ be the binary matroid represented by the following $2n$ vectors $\mathcal V$. 

\begin{itemize}
    \item $\mathbf{e}_i$, for all $i \in [n-1]$;
    \item $\mathbf{e}_i+ \mathbf{e}_{i+1}$, for all $i \in [n-2]$;
    \item $\mathbf 1$, $\mathbf{1} + \mathbf{e}_{n-2}$, and $\mathbf{e}_1+ \mathbf{e}_{n-2}$.  
\end{itemize}

Since $\{ \mathbf{e}_i \mid i \in [n-1]\} $ are linearly independent, $M$ has rank $n-1$.  Moreover, all vectors in $\mathcal V$ are distinct and non-zero, so $M$ is simple.  

We now specify the colouring, which is just a pairing of $\mathcal V$.  For each $i \in [n-3]$, we pair $\mathbf{e}_i$ with $\mathbf{e}_i + \mathbf{e}_{i+1}$.  Finally, we pair $\mathbf{e}_{n-2}$ with $\mathbf{e}_1+ \mathbf{e}_{n-2}$; $\mathbf{e}_{n-1}$ with $\mathbf{e}_{n-2}+\mathbf{e}_{n-1}$; and $\mathbf 1$ with $\mathbf{1} + \mathbf{e}_{n-2}$.  To illustrate, the case $n=6$ is given by the following matrix, where column $i$ and column $6+i$ are the same colour for all $i \in [6]$.  

\[
\left( \begin{array}{@{}*{12}{c}@{}}
\textcolor{red}{1} & \textcolor{blue}{0} & \textcolor{green}{0} & \textcolor{magenta}{0} & \textcolor{purple}{0} & \textcolor{cyan}{1}     & \textcolor{red}{1} & \textcolor{blue}{0} & \textcolor{green}{0} & \textcolor{magenta}{1} & \textcolor{purple}{0} & \textcolor{cyan}{1}      \\
\textcolor{red}{0} & \textcolor{blue}{1} & \textcolor{green}{0} & \textcolor{magenta}{0} & \textcolor{purple}{0} & \textcolor{cyan}{1}      & \textcolor{red}{1} & \textcolor{blue}{1} & \textcolor{green}{0} & \textcolor{magenta}{0} & \textcolor{purple}{0} & \textcolor{cyan}{1}      \\
\textcolor{red}{0} & \textcolor{blue}{0} & \textcolor{green}{1} & \textcolor{magenta}{0} & \textcolor{purple}{0} & \textcolor{cyan}{1}      & \textcolor{red}{0} & \textcolor{blue}{1} & \textcolor{green}{1} & \textcolor{magenta}{0} & \textcolor{purple}{0} & \textcolor{cyan}{1}      \\
\textcolor{red}{0} & \textcolor{blue}{0} & \textcolor{green}{0} & \textcolor{magenta}{1} & \textcolor{purple}{0} & \textcolor{cyan}{1}      & \textcolor{red}{0} & \textcolor{blue}{0} & \textcolor{green}{1} & \textcolor{magenta}{1} & \textcolor{purple}{1} & \textcolor{cyan}{0}      \\
\textcolor{red}{0} & \textcolor{blue}{0} & \textcolor{green}{0} & \textcolor{magenta}{0} & \textcolor{purple}{1} & \textcolor{cyan}{1}      & \textcolor{red}{0} & \textcolor{blue}{0} & \textcolor{green}{0} & \textcolor{magenta}{0} & \textcolor{purple}{1} & \textcolor{cyan}{1}      
\end{array} \right)\]

Note that a subset of $\mathcal V$ is linearly dependent if and only if it sums to $\mathbf 0$.  Therefore, it suffices to prove that every rainbow subset of $\mathcal V$ summing to  $\mathbf 0$ has size more than $\frac{n}{2}$.  Let $\mathcal C \subseteq \mathcal V$ be a rainbow set such that $\sum_{v \in \mathcal C} v = \mathbf 0$.  

We first consider the case $\mathbf 1 \in \mathcal{C}$ or $\mathbf{1} + \mathbf{e}_{n-2} \in \mathcal{C}$. Since $\mathbf 1$ and  $\mathbf{1} + \mathbf{e}_{n-2}$ are the same colour, exactly one of them, which we call $x$, is in $\mathcal C$.  Since $\mathbf{e}_{n-1}$ and $\mathbf{e}_{n-2}+\mathbf{e}_{n-1}$ are the only other vectors in $\mathcal{V}$ that are non-zero in their $(n-1)$th coordinate, exactly one of them, which we call $y$, is in $\mathcal{C}$. Let $\mathcal C' = \mathcal C \setminus \{x,y\}$.  In all four cases, $\sum_{v \in \mathcal C '}$ is $\mathbf 1 + \mathbf{e}_{n-1}$ or $\mathbf 1 + \mathbf{e}_{n-1}+\mathbf{e}_{n-2}$. Since $n$ is even, and all vectors in $\mathcal{C}'$ have support at most $2$, $|\mathcal C '| \geq \frac{n}{2}-1$.  Therefore, $|\mathcal{C}| > \frac{n}{2}$.

The remaining case is $\mathbf 1 \notin \mathcal{C}$ and $\mathbf{1} + \mathbf{e}_{n-2} \notin \mathcal C$.  The only other vectors in $\mathcal V$ whose $(n-1)$th coordinate is non-zero are $\mathbf{e}_{n-1}$ and $\mathbf{e}_{n-2}+\mathbf{e}_{n-1}$.  Since  $\mathbf{e}_{n-1}$ and $\mathbf{e}_{n-2}+\mathbf{e}_{n-1}$ are the same colour, they cannot both be in $\mathcal{C}$.  Thus, $\mathbf{e}_{n-1} \notin \mathcal C$ and $\mathbf{e}_{n-2}+\mathbf{e}_{n-1} \notin \mathcal C$. Let $\mathcal C=\mathcal C_1 \cup \mathcal C_2$, where $\mathcal C_i$ are the vectors in $\mathcal{C}$ whose support has size $i$. Let $G$ be the cycle with vertex set $\mathbb{Z}_{n-2}$ and edge set $\{i(i+1): i \in \mathbb{Z}_{n-2}\}$.  Note that we may regard $\mathcal{C}_1 \subseteq V(G)$ and $\mathcal{C}_2 \subseteq E(G)$. Let $H$ be the subgraph of $G$ with vertex set $V(G)$ and edge set $\mathcal{C}_2$.  Since $\sum_{v \in \mathcal C} v = \mathbf 0$, the set of odd-degree vertices of $H$ is precisely $\mathcal C_1$.   Thus, $|\mathcal C_1|$ is even, and there are only two possibilites for $H$. If $|\mathcal C_1| \geq 2$, then there exists $i \in \mathbb{Z}_{n-2}$ such that $i \in \mathcal C_1$ and $i(i+1) \in \mathcal{C}_2$.  However, this contradicts that $\mathcal{C}$ is rainbow.  Therefore, $\mathcal C_1=\emptyset$ and $\mathcal{C}_2=|E(G)|$. Thus, $|\mathcal C| = |\mathcal{C}_2|= n-2 > \frac{n}{2}$.
\end{proof}

 A slight modification of the above construction also yields counterexamples for all odd integers $n \geq 7$.  On the other hand, it is fairly easy to show that Conjecture~\ref{conj:matroid} holds for binary matroids when $n \leq 5$ (it is true vacuously when $n \leq 4$). Thus, Conjecture~\ref{conj:matroid} holds for binary matroids if and only if $n \leq 5$.  
 
 \textbf{Update.} A forthcoming paper of Yuhang Bai, Nathan Bowler, and Tony Huynh strengthens Theorem~\ref{thm:binary} as follows.  For each $n \geq 1$, there exists a rank-$6n$ simple binary matroid $M$ on $12n+2$ elements, and a partition of $E(M)$ into $6n+1$ colour classes of size $2$, such that the shortest rainbow circuit of $M$ has size $4n$.

\section{Open Problems} \label{sec:open}
Note that in proving Theorem~\ref{thm:rainbowgirth}, we only use one fixed transversal.  By considering multiple transversals, we suspect that the bound in Theorem~\ref{thm:rainbowgirth} can be improved.  

\begin{prob}
Determine $f(n,t)$ for $t > n$.
\end{prob}

Since the Caccetta-Häggkvist conjecture is known to hold for $r \in \{3,4,5\}$, another possible direction is to prove Aharoni's conjecture for $r \in \{3,4,5\}$.

\begin{prob}
Prove that Conjecture~\ref{conj:aharoni} (or Conjecture~\ref{conj:weakaharoni}) holds for $r \in \{3,4,5\}$.
\end{prob}

Recall that our proof of Aharoni's conjecture for $r=2$ uses Theorem~\ref{directed} as a blackbox.  It would be interesting to find a proof of Theorem~\ref{thm:main} that avoids using Theorem~\ref{directed}. 

Finally, by Theorems~\ref{thm:main} and~\ref{thm:cographic}, Conjecture~\ref{conj:matroid} holds for both graphic and cographic matroids. Therefore, we suspect there is a proof of Conjecture~\ref{conj:matroid} for regular matroids via Seymour's regular matroid decomposition theorem~\cite{Seymour80}.

\begin{conj} \label{conj:regular}
Let $M$ be a simple rank-$(n-1)$ regular matroid and $c$ be a colouring of $E(M)$ with $n$ colours, where each colour class has size at least $2$. Then $M$ contains a rainbow circuit of size at most $\lceil \frac{n}{2} \rceil$.
\end{conj}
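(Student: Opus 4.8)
The plan is to prove Conjecture~\ref{conj:regular} by induction on $|E(M)|$, using Seymour's regular matroid decomposition theorem~\cite{Seymour80} to reduce to the graphic and cographic cases already settled in Theorems~\ref{thm:main} and~\ref{thm:cographic}. As in the proof of Theorem~\ref{thm:cographic}, I would first pass to a minimal counterexample $(M,c)$, so that every colour class has size exactly $2$ and hence $|E(M)|=2n$. Seymour's theorem implies that such an $M$ is graphic, cographic, isomorphic to $R_{10}$, or expressible as a $1$-, $2$-, or $3$-sum $M_1 \oplus_k M_2$ of two smaller regular matroids. The graphic and cographic cases are immediate from Theorems~\ref{thm:main} and~\ref{thm:cographic}. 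The case $M \cong R_{10}$ cannot arise: $R_{10}$ has rank $5$, so the hypothesis would force $n=6$ and hence at least $12$ elements, whereas $R_{10}$ has only $10$. Thus the entire difficulty lies in the inductive step for the three sum operations.

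For the $1$-sum ($M = M_1 \oplus M_2$) I would mimic the first half of the proof of Theorem~\ref{thm:cographic}: since rank is additive over direct sums, $r(M_1)+r(M_2)=n-1$, and a counting argument shows that either some summand $M_i$ contains enough elements that, after \emph{merging colour classes} so that exactly $r(M_i)+1$ colours appear and each has size at least $2$, the inductive hypothesis produces a rainbow circuit of size at most $\lceil \frac{r(M_i)+1}{2}\rceil \le \lceil \frac n2\rceil$, or a boundary case occurs that is dispatched directly as in Theorem~\ref{thm:cographic}. Unmerging colours preserves rainbowness, and a circuit of a summand is a circuit of $M$, so this yields the required short rainbow circuit.

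The heart of the argument, and the main obstacle, is the $2$-sum and $3$-sum steps, where the colouring must be distributed across the two pieces. Write $M = M_1 \oplus_2 M_2$ across the connecting element $p$, so that $E(M) = (E(M_1)\setminus p)\cup(E(M_2)\setminus p)$ and $r(M_1)+r(M_2)=n$. Each size-$2$ colour class of $M$ is confined to one side or \emph{split} between the two sides. My plan is to build on each $M_i$ an auxiliary colouring with exactly $r(M_i)+1$ colour classes, each of size at least $2$, by assigning a colour to the connecting element $p$, absorbing the ``half-colours'' coming from split classes, and merging classes as needed; the inductive hypothesis then supplies a short rainbow circuit in some $M_i$. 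Two things must be controlled simultaneously. First, the arithmetic: the identities $a+b+s=n$ (where $a,b,s$ count the side-$1$, side-$2$, and split colours) and $r(M_1)+r(M_2)=n$ must be reconciled with the requirement of $r(M_i)+1$ classes of size at least $2$ per side, and one must check that at least one side has the surplus of elements the merging step needs. Second, and more seriously, a rainbow circuit produced in $M_i$ is directly a circuit of $M$ only if it avoids $p$; a circuit \emph{through} $p$ corresponds in $M$ to the gluing $(C_1\cup C_2)\setminus p$ of a circuit of each side, which may exceed $\lceil \frac n2\rceil$ in length and need not be rainbow if a split class is straddled.

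Consequently I expect the crux to be a careful choice of the auxiliary colour of the connecting element that forces the inductively-found rainbow circuit to avoid the connector, together with a separate gluing lemma that bounds the length of any straddling rainbow circuit. The $3$-sum case is analogous, but with a connecting triangle in place of the single element $p$, so that $r(M_1)+r(M_2)=n+1$ and three connecting elements, rather than one, must be incorporated into the auxiliary colourings; I would treat it by the same template once the $2$-sum bookkeeping is in place. Verifying that this distribution of colours can always be arranged, uniformly over the finitely many ways a split class and the connector can interact (and after simplifying any parallel elements the operation creates), is the step I expect to be genuinely delicate, and is presumably why the statement remains a conjecture.
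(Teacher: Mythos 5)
This statement is not a theorem of the paper: it is Conjecture~\ref{conj:regular}, stated in the open problems section, and the paper offers no proof of it --- only the remark that the authors ``suspect there is a proof \dots via Seymour's regular matroid decomposition theorem.'' Your proposal follows exactly that suggested route, and your preliminary observations are sound: the reduction to a minimal counterexample with $|E(M)|=2n$, the disposal of the graphic and cographic base cases via Theorems~\ref{thm:main} and~\ref{thm:cographic}, the observation that $R_{10}$ (rank $5$, ten elements) cannot satisfy the hypothesis since $2n=12>10$, and the correct rank identities $r(M_1)+r(M_2)=n$ and $r(M_1)+r(M_2)=n+1$ for $2$- and $3$-sums. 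But what you have written is a plan, not a proof, and you say so yourself: the entire content of the inductive step for $2$- and $3$-sums is deferred to a ``careful choice of the auxiliary colour of the connecting element'' and ``a separate gluing lemma,'' neither of which is supplied.

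The gap is genuine and is precisely where the difficulty of the conjecture lives. In a $2$-sum $M=M_1\oplus_2 M_2$ across $p$, a circuit of $M$ either lies in one side and avoids $p$, or has the form $(C_1\cup C_2)\setminus\{p\}$ with $C_i$ a circuit of $M_i$ through $p$. To apply induction to $M_i$ you must colour $p$: a singleton colour class violates the hypothesis, and merging $p$ into an existing class means the inductively produced rainbow circuit may pass through $p$, in which case you must find a \emph{matching} short rainbow circuit through $p$ on the other side whose union is still rainbow --- and the lengths then add, so $\lceil \frac{r(M_1)+1}{2}\rceil + \lceil \frac{r(M_2)+1}{2}\rceil$ can exceed $\lceil \frac{n}{2}\rceil$. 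Split colour classes straddling the two sides compound the problem, since they can make the glued circuit non-rainbow. None of these obstructions is resolved in your write-up, and no counting argument is given showing that at least one side satisfies the hypotheses of the inductive statement after the auxiliary colouring is imposed. Since the authors themselves left this as a conjecture for exactly these reasons, your proposal should be read as a (reasonable) research programme rather than a proof; as a proof it is incomplete at its central step.
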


\subsection*{Acknowledgements.} We would like to thank Ron Aharoni for bringing Conjecture~\ref{conj:aharoni} to our attention.  We also thank Tillmann Miltzow for help in making Figure~\ref{fig:tillgraph}.  

After the journal version of this paper was published (see \url{https://onlinelibrary.wiley.com/doi/10.1002/jgt.22607}), Emanuele Natale and Édouard Oyallon found a slight inaccuracy in the proof of Theorem~\ref{thm:binary}.  We thank them both for alerting us and have corrected the proof of Theorem~\ref{thm:binary} in this version.

\bibliographystyle{plain}
\bibliography{references} 
\end{document}